\documentclass[12pt]{imsart}
\usepackage{amsthm,amsmath}
\usepackage[colorlinks,citecolor=blue,urlcolor=blue]{hyperref}

\usepackage{amsmath,amsfonts,amssymb,amsthm}

\usepackage{enumitem}

\RequirePackage{amsthm,amsmath,amsfonts,amssymb}
\RequirePackage[numbers]{natbib}
\RequirePackage[colorlinks,citecolor=blue,urlcolor=blue]{hyperref}
\RequirePackage{graphicx}
\usepackage{lineno,hyperref}

\startlocaldefs
\modulolinenumbers[5]

\usepackage[utf8]{inputenc}
\usepackage[english]{babel}

\usepackage[margin=1in]{geometry}
\usepackage{bm}
\usepackage{graphicx}
\usepackage{amssymb,amsmath,amsthm,mathrsfs}
\usepackage{epstopdf}
\usepackage{algorithm}
\usepackage{amsfonts,delarray}
\usepackage{algorithm,algcompatible,amsmath}
\usepackage{rotating,color}
\usepackage{subfigure}
\usepackage{multirow}
\usepackage{titlesec,textcase}
\usepackage{longtable}
\usepackage{bbm}
\usepackage{rotating}

\newtheorem{Theorem}{Theorem}[section]

\newtheorem{Corollary}[Theorem]{Corollary}
\newtheorem{Proposition}[Theorem]{Proposition}

\newtheorem{Definition}[Theorem]{Definition}

\theoremstyle{definition}

\RequirePackage[normalem]{ulem} %DIF PREAMBLE
\definecolor{rp}{RGB}{83,54,106}

\def\boxit#1{\vbox{\hrule\hbox{\vrule\kern6pt\vbox{\kern6pt#1\kern6pt}\kern6pt\vrule}\hrule}}

\endlocaldefs

\allowdisplaybreaks

\begin{document}
\begin{frontmatter}
\title{The interplay between network transitivity and community structure}

\runtitle{ transitivity
and community structure  }
%\thankstext{T1}{A sample additional note to the title.}
\runauthor{ }
\begin{aug}

\author[A]{\fnms{Mingao} \snm{Yuan}\ead[label=e1]{myuan2@utep.edu}},
\author[B]{\fnms{Irin} \snm{Rahman}\ead[label=e2]{irin.rahman@ndsu.edu}},
\author[C]{\fnms{Chengay S} \snm{Wangchuk}\ead[label=e3]{cswangchuk@miners.utep.edu}} \and
\author[D]{\fnms{Minglian} \snm{Lin}\ead[label=e4]{mlin2@utep.edu}}

%%%%%%%%%%%%%%%%%%%%%%%%%%%%%%%%%%%%%%%%%%%%%%
%% Addresses                                %%
%%%%%%%%%%%%%%%%%%%%%%%%%%%%%%%%%%%%%%%%%%%%%%

\address[A]{Department of Mathematical Sciences,
The University of Texas at El Paso,
\printead{e1}}

\address[B]{Department of Statistics,
North Dakota State University,
\printead{e2}}

\address[C]{Department of Mathematical Sciences,
The University of Texas at El Paso,
\printead{e3}}

\address[D]{Department of Mathematical Sciences,
The University of Texas at El Paso,
\printead{e4}}

\end{aug}

\begin{abstract}
Recent empirical observations suggest that network transitivity is highly correlated with community structure in many real-world networks. In this paper, we theoretically investigate this relationship by deriving the limits of the global and average clustering coefficients for the geometric block model (GBM). Both limits exhibit a phase transition; specifically, the functional forms of the limit functions differ between the weak and strong community structure strength regimes. For a GBM with balanced communities, the limits of the global and average clustering coefficients are identical, whereas these limits differ for unbalanced communities. In general, the clustering coefficients do not exhibit a monotonic relationship with community structure  strength. Particularly, for a balanced GBM where the within-community edge probability is a constant multiple of the between-community edge probability, the limit decreases from  $3/4$ to $3/5$ and subsequently increases toward an asymptotic upper bound of $3/4$ as the multiple grows from one. A similar pattern is observed for the global clustering coefficient in unbalanced settings, where both limits exhibit an explicit dependence on community size.

\end{abstract}

\begin{keyword}[class=MSC2020]
\kwd[]{60K35}
\kwd[;  ]{05C80}
\end{keyword}

\begin{keyword}
\kwd{geometric block model}
\kwd{the global clustering coefficient}
\kwd{the average clustering coefficient}
\kwd{community structure}
\end{keyword}

\end{frontmatter}

\section{Introduction}
\label{S:1}
In a network or graph, community structure refers to the phenomenon where nodes are organized into clusters characterized by dense internal connections and relatively sparse ties to the rest of the network. These cohesive subsets reveal the system's underlying organization and functional building blocks. Detecting these communities is essential for understanding the dynamics of complex systems \cite{A17,CY06,N03,F10}.

Network transitivity refers to the phenomenon where the neighbors of a node tend to be connected to one another. The global and average clustering coefficients are standard metrics for quantifying network transitivity \cite{WS98, N09}. The global clustering coefficient is the ratio of three times the number of triangles to the number of connected triplets (2-paths), representing the extent to which nodes sharing a common neighbor tend to be connected. Conversely, the local clustering coefficient measures the fraction of a node's neighbors that are interconnected. The average clustering coefficient is simply the average of these local values across all nodes. These metrics are widely applied in network data analysis \cite{BBA01,CLX01,LCYC14}.

Transitivity and community structure are frequently co-occurring features in real-world networks \cite{LKSF10}. While the relationship between them may appear intuitive—suggesting that higher transitivity fosters stronger communities and vice versa—several counterexamples exist. For instance, a fully connected network possesses maximal transitivity but lacks distinct community structure. Conversely, in a network composed of fully connected bipartite communities, the community structure may be pronounced while the transitivity remains zero. To clarify this relationship, recent studies \cite{OLC13,PVS21} have experimentally investigated how these two properties mutually influence one another. Empirical results suggest that  transitivity generally increases with community strength, and the scale of these communities also plays a significant role \cite{OLC13,PVS21}.

This paper presents a theoretical investigation into the relationship between transitivity and community structure. We derive the limits for both global and average clustering coefficients within a Geometric Block Model (GBM), a framework designed to represent community structures in networks with spatial geometry and dependence  structure \cite{GMPS23}. Our results show that both limits undergo a phase transition, with distinct functional forms emerging in weak versus strong community structure strength regimes. The  limits of the clustering coefficients are identical for GBM with balanced communities, and they diverge in unbalanced settings. Notably, the clustering coefficients do not generally exhibit a monotonic relationship with the strength of the community structure. Specifically, in a balanced GBM where the within-community edge probability 
is a constant multiple of the between-community probability, the limit 
decreases from $3/4$ to $3/5$ as the multiple increases from one to four; while 
it subsequently recovers as the multiple grows from four, it remains  
below $3/4$ (see Corollary \ref{basper} and Figure  \ref{gcp}).
 It is worth noting that when the multiple equals one, the GBM reduces to a standard random geometric graph without community structure, which possesses asymptotic clustering coefficients of 
$3/4$. These findings suggest that the introduction of community structure into a random geometric graph generally suppresses the clustering coefficients; moreover, these coefficients change non-monotonically as the community structure strength increases.
 In unbalanced GBM, the global clustering coefficient follows a similar pattern, and both limits depend explicitly on the size of the communities.

The remainder of this paper is organized as follows. In Section 2, we formally introduce the clustering coefficients and the Geometric Block Model. Section 3 presents our main results regarding the global clustering coefficient, while Section 4 focuses on the average clustering coefficient. Finally, Section 5 provides the mathematical proofs for our findings.

\medskip

Notations: Let \(c_{1}\) and \(c_{2}\) be two positive constants. For two positive sequences \(\{a_n\}\) and \(\{b_n\}\), we write \(a_n = \Theta(b_n)\) if \(c_1 \leq a_n/b_n \leq c_2\) for sufficiently large \(n\). We denote \(a_n = O(b_n)\) if \(a_n/b_n \leq c_2\), and \(a_n = o(b_n)\) (or \(b_n = \omega(a_n)\)) if \(\lim_{n \to \infty} a_n/b_n = 0\). For a sequence of random variables \(\{X_n\}\), \(X_n = O_P(a_n)\) indicates that \(X_n/a_n\) is bounded in probability, while \(X_n = o_P(a_n)\) signifies that \(X_n/a_n\) converges to zero in probability. We use \(\mathbb{I}[E]\) to denote the indicator function of an event \(E\). Furthermore, \(\mathbb{E}[X_n]\) and \(\mathbb{P}[E]\) denote the expectation and probability, respectively. For a finite set \(V\), \(\vert{}V\vert{}\) denotes its cardinality. $\mathbb{Z}$ denotes the set of all integers.

\section{ Preliminaries }\label{main}

A graph (or network) $\mathcal{G} = (\mathcal{V}, \mathcal{E})$ consists of a vertex set $\mathcal{V} = \{1, \dots, n\}$ and an edge set $\mathcal{E}$. For an \textit{undirected graph}, $\mathcal{E}$ is a collection of subsets of $\mathcal{V}$ such that $|e|=2$ for all $e \in \mathcal{E}$. The \textit{adjacency matrix} $A \in \{0, 1\}^{n \times n}$ is defined by $A_{ij} = 1$ if $\{i, j\} \in \mathcal{E}$ and $A_{ij} = 0$ otherwise; by symmetry, $A_{ij} = A_{ji}$ and $A_{ii} = 0$.

The \textit{degree} of node $i$ is given by $d_i = \sum_j A_{ij}$. A \textit{triangle} is a set of three mutually adjacent vertices, while a \textit{2-path} consists of three distinct vertices connected by exactly two edges. For example, nodes $i, j, k$ form a triangle if $A_{ij}A_{jk}A_{ki} = 1$, and a 2-path if $A_{ij}A_{jk} = 1$. A graph is \textit{random} if the upper-triangular entries $\{A_{ij}\}_{i < j}$ are random variables.

\begin{Definition}\label{def0}
Given a graph $\mathcal{G}=(\mathcal{V},\mathcal{E})$  with adjacency matrix $A$,  the \textbf{global clustering coefficient}  is defined as
\[\mathcal{C}_n=\frac{\sum_{i\neq j\neq k}A_{ij}A_{jk}A_{ki}}{\sum_{i\neq j\neq k}A_{ij}A_{jk}}.\]
The  \textbf{average clustering coefficient} is defined as
\begin{equation}\label{clustercoe}
\overline{\mathcal{C}}_n=\frac{1}{n}\sum_{i=1}^n\frac{\sum_{j\neq k} A_{ij}A_{jk}A_{ki}}{d_i(d_i-1)},
\end{equation}
where any summation terms with $d_i=0,1$ are set to be zero.
\end{Definition}

The global clustering coefficient $\mathcal{C}_n$ is defined as the ratio of the number of triangles to the number of 2-paths \cite{WS98, N09}. 
It quantifies the transitivity of the graph, representing the tendency of two neighbors of a common vertex to be connected. The average clustering coefficient $\overline{\mathcal{C}}_n$ is an alternative measure of global clustering or transitivity \cite{WS98, N09}. The $i$-th summation term in $\overline{\mathcal{C}}_n$ is called the local clustering coefficient of node $i$. The average clustering coefficient is the average of the local clustering coefficients of all vertices.

The \textit{Geometric Block Model} is a latent space model designed to capture the inherent geometric features and clustering properties of networks with community structure \cite{GMPS23}. In this study, we focus on the following formulation:

\begin{Definition}\label{def1}
Let $n$ be a positive  integer and  $\tau \in [0, 1]$ be a constant such that $\tau n \in \mathbb{Z}$. Suppose $r_s, r_d \in [0, 0.5]$ are real numbers satisfying $r_s \geq r_d$. Let $V_1, V_2 \subset \{1, 2, \dots, n\}$ be disjoint subsets such that $|V_1| = \tau n$ and $|V_2| = (1-\tau) n$.
 Given independent uniform random variables $X_1, X_2, \dots, X_n$ on $[0, 1]$, the \textbf{Geometric Block Model} (GBM) $\mathcal{G}_n(r_s, r_d)$ is defined by the adjacency matrix $A$, where $A_{ii} = 0$ and for $i \neq j$:
\[
A_{ij} = \mathbb{I} \left[ d(X_i, X_j) \leq r_d + (r_s - r_d) \mathbb{I}[Z_i = Z_j] \right]
\]
where $d(X_i, X_j) = \min \{|X_i - X_j|, 1 - |X_i - X_j|\}$, and the community indicators are $Z_i = 1$ if $i \in V_1$ and $Z_i = 0$ if $i \in V_2$.
\end{Definition}

In \(\mathcal{G}_n(r_s, r_d)\), nodes are uniformly distributed on the unit interval \([0, 1]\), and each node $i$ is assigned a community membership indicator $Z_i$. Two nodes are connected if their distance is less than a threshold that depends on their community membership.
In contrast to the distance metric used in \cite{GMPS23}, we employ a periodic distance on the unit interval to eliminate boundary effects.
The network consists of two communities, $V_1$ and $V_2$, where the parameter $\tau \in [0, 1]$ determines the relative size of each community. In the GBM of \cite{GMPS23}, the parameter is set to \(\tau = 1/2\); that is, the two communities have an equal number of nodes, a configuration we refer to as balanced community structure or balanced communities. When $\tau\neq1/2$, we say the community structure is unbalanced.
 The parameters $r_s$ and $r_d$ govern the edge densities within and across communities, respectively. Specifically, two nodes belonging to the same community are connected if their distance is less than $r_s$, whereas nodes in different communities are connected if their distance is less than $r_d$. The assumption $r_s \geq r_d$ reflects the homophilic property that nodes within the same community are more likely to be connected than those in different communities.  When \(\tau \in \{0, 1\}\) or \(r_s = r_d\), \(\mathcal{G}_n(r_s, r_d)\) simplifies to a standard random geometric graph without community structure.
In Definition \ref{def1}, we assume $\tau n$ is an  integer for convenience; however, the model $\mathcal{G}_n(r_s, r_d)$ can be analogously defined using the nearest integer to $\tau n$ when $\tau n$ is not an integer, and our main results remain valid in that case. Throughout this paper, we allow \(r_{s}\) and \(r_{d}\) to scale with \(n\).

\section{The global clustering coefficient of GBM}

This section provides the asymptotic expression for the global clustering coefficient of $\mathcal{G}_n(r_s, r_d)$. Firstly, we present the expectations of a triangle and a 2-path appearing in $\mathcal{G}_n(r_s, r_d)$. These results are necessary to derive the limits of the clustering coefficients. They are also of independent interest for understanding the number of triangles and 2-paths of $\mathcal{G}_n(r_s, r_d)$.

\begin{Proposition}\label{proptria}
    Let $A \sim \mathcal{G}_n(r_s, r_d)$.  
    \begin{enumerate}
    \item[(I)] Suppose $r_d \leq r_s < 2r_d$. Then for $i\in\{1,2,3\}$, we have
    \begin{equation}\label{trian1}
        \mathbb{E}[A_{12}A_{13}A_{23}|X_i]=3r_s^2\mathbb{I}(Z_1=Z_2=Z_3)+(4r_sr_d-r_s^2)\big(1-\mathbb{I}(Z_1=Z_2=Z_3)\big).
    \end{equation}
    \item[(II)] Suppose $r_s \geq 2r_d$. Then for $i\in\{1,2,3\}$, we have
    \begin{equation}\label{trian2}
               \mathbb{E}[A_{12}A_{13}A_{23}|X_i]=3r_s^2\mathbb{I}(Z_1=Z_2=Z_3)+4r_d^2\big(1-\mathbb{I}(Z_1=Z_2=Z_3)\big).
    \end{equation}
    \item[(III)]  For $i\in\{1,2,3\}$, we have
    \begin{align}\nonumber
               \mathbb{E}[A_{12}A_{13}|X_i]&=4r_s^2\mathbb{I}(Z_1=Z_2=Z_3)+4r_sr_d\big(\mathbb{I}(Z_1=Z_2\neq Z_3)+\mathbb{I}(Z_1=Z_3\neq Z_2)\big)\\ \label{trian3}
               &\quad +4r_d^2\mathbb{I}(Z_1\neq Z_2=Z_3).
    \end{align}
\end{enumerate}
\end{Proposition}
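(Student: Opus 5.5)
By rotation invariance of the periodic distance on the circle $[0,1)$, the conditional expectation given $X_i$ does not depend on the value of $X_i$. It is therefore enough to fix $X_i=0$ (or any reference point) and integrate over the other two positions, which are independent uniforms. This also shows the conditional expectation equals the unconditional one.

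Write $r_{ab}$ for the connection radius between nodes $a,b$; it equals $r_s$ if $Z_a=Z_b$ and $r_d$ otherwise. The assumption $r_s,r_d\le 1/2$ ensures that the set $\{x: d(0,x)\le r\}$ is an arc of length $2r$. It also ensures that, for small radii, differences of positions can be treated as differences on $\mathbb{R}$. For general radii up to $1/2$, the wrap-around must be checked; I will assume the radii are small enough (as in the asymptotic regime) that $2r_s+r_d$-type quantities stay below $1/2$, so no wrap-around occurs. If it does occur, the same computation is done on the circle with care.

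\textbf{Part (III).} Given $X_1$, the events $\{A_{12}=1\}$ and $\{A_{13}=1\}$ depend on independent uniforms $X_2$ and $X_3$. Hence
\[
\mathbb{E}[A_{12}A_{13}\mid X_1]=(2r_{12})(2r_{13}).
\]
Enumerating the community patterns gives $4r_s^2$, $4r_sr_d$ or $4r_d^2$. For conditioning on $X_2$ or $X_3$ instead, rotation invariance gives the same value, since the unconditional expectation is also $4r_{12}r_{13}$.

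\textbf{Parts (I) and (II).} Set $X_1=0$ and let $u,v$ be the signed positions of nodes $2$ and $3$, with $|u|\le r_{12}$ and $|v|\le r_{13}$. The triangle requires $|u-v|\le r_{23}$, so the expectation is the area
\[
\operatorname{Area}\{(u,v): |u|\le a,\ |v|\le b,\ |u-v|\le c\},
\]
where $\{a,b,c\}$ are the three radii.

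If all three nodes share a community, then $a=b=c=r_s$. The area is the square $4r_s^2$ minus two corner triangles of area $r_s^2/2$ each, giving $3r_s^2$.

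Otherwise, exactly two of the three pairs are cross-community. By the symmetry of the area in the roles of $a,b,c$ (change variables $w=u-v$), it suffices to take $a=b=r_d$ and $c=r_s$; the other labellings give the same area. The band $|u-v|\le r_s$ cuts the square $[-r_d,r_d]^2$ only if $r_s<2r_d$. When it does, it removes two corner triangles, each with legs $2r_d-r_s$, so the area is
\[
4r_d^2-(2r_d-r_s)^2=4r_sr_d-r_s^2,
\]
which is case (I). When $r_s\ge 2r_d$, the band contains the whole square and the area is $4r_d^2$, which is case (II).

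\textbf{Main obstacle.} The delicate step is justifying the reduction of the symmetric cases and the no-wrap-around claim on the circle. Specifically, one must show that the region's area on the torus equals the planar one, i.e.\ that constraints like $|u-v|\le c$ taken modulo $1$ add no extra pieces. This requires $a+b+c\le 1$ roughly, which I would verify from $r_s,r_d\le 1/2$ or else impose as a small-radius condition.
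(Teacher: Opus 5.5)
Your argument is correct once a small-radius condition is imposed (discussed below), and it takes a genuinely different route from the paper. The paper handles the mixed-community triangle by direct iterated integration. It fixes the conditioned point at $x=0.5$ and splits according to which side of $x$ the other two points lie on. It then computes $\mathbb{E}[A_{12}A_{23}A_{31}\mid X_3]$ and $\mathbb{E}[A_{12}A_{23}A_{31}\mid X_1]$ separately; both come out as $4r_sr_d-r_s^2$. The homogeneous value $3r_s^2$ and all of part (II) are imported from Lemma 3.1 of \cite{Y25c}, and in (III) conditioning on $X_2$ is handled through the tower property.

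You instead use rotation invariance once, so that every conditional expectation equals the unconditional one. You then write that expectation as the area of $[-a,a]\times[-b,b]$ cut by the band $|u-v|\le c$. The area-preserving maps $(u,v)\mapsto(v,u)$ and $(u,v)\mapsto(u,u-v)$ show that this area is symmetric in $a,b,c$. This single computation covers all three community patterns, both regimes, and the homogeneous case, with no external lemma. It also explains the phase transition: the band $|u-v|\le r_s$ stops clipping the square $[-r_d,r_d]^2$ exactly when $r_s\ge 2r_d$.

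The one thing to fix is your treatment of wrap-around. The condition $a+b+c\le 1$ cannot be ``verified from $r_s,r_d\le 1/2$''. Redoing the computation ``on the circle with care'' would also not give the stated values, because the proposition is false without a small-radius assumption. For example, if $r_s=1/2$ then every same-community pair is adjacent, so for $Z_1=Z_2=Z_3$ the triangle probability is $1\neq 3r_s^2$. More generally, in the homogeneous case with $r_s\in(1/3,1/2]$, the wrapped corners add $(3r_s-1)^2$ to $3r_s^2$.

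So take your second option and impose the condition. You need $3r_s\le 1$ for the homogeneous term and $2r_d+r_s\le 1$ for the mixed term in regime (I); both follow from $r_s\le 1/3$, since $r_d\le r_s$ there. The mixed value $4r_d^2$ in regime (II) needs no condition, because $|u-v|\le 2r_d\le r_s$ on the whole square. The paper's proof relies on the same assumption tacitly: its integration limits, such as $x<X_2<X_1+r_s$, use planar rather than periodic distance. On this point your proposal is more careful than the original. Part (III) holds for all $r_s,r_d\le 1/2$, exactly as you argue.
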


Taking the expectation with respect to $X_i$ in (\ref{trian1}), (\ref{trian2}), and (\ref{trian3}) yields the unconditional expectations of a triangle and a 2-path. These expectations depend on whether the involved nodes belong to the same community.
 Interestingly, the expectation of a triangle undergoes a phase change and has different expressions for $r_d \leq r_s < 2r_d$ and $r_s \geq 2r_d$. These results lead to the phase change of the limits of the clustering coefficients at $r_s=2r_d$.

\begin{Theorem}\label{asygc}
Suppose $A \sim \mathcal{G}_n(r_s, r_d)$ with $r_s = \Theta(r_d)$ and $nr_s = \omega(1)$. The global clustering coefficient $\mathcal{C}_n$ satisfies:
\begin{enumerate}
    \item[(I)] If $r_d \leq r_s < 2r_d$, then
    \begin{equation}\label{gc1}
        \mathcal{C}_n = \frac{[3 - 9\tau(1-\tau)]r_s^2 + 3\tau(1-\tau)(4r_sr_d - r_s^2)}{[4 - 12\tau(1-\tau)]r_s^2 + 8\tau(1-\tau)r_sr_d + 4\tau(1-\tau)r_d^2} + o_P(1).
    \end{equation}
    \item[(II)] If $r_s \geq 2r_d$, then
    \begin{equation}\label{gc2}
        \mathcal{C}_n = \frac{[3 - 9\tau(1-\tau)]r_s^2 + 12\tau(1-\tau)r_d^2}{[4 - 12\tau(1-\tau)]r_s^2 + 8\tau(1-\tau)r_sr_d + 4\tau(1-\tau)r_d^2} + o_P(1).
    \end{equation}
\end{enumerate}
\end{Theorem}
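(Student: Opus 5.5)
We write $T_n=\sum_{i\neq j\neq k}A_{ij}A_{jk}A_{ki}$ and $P_n=\sum_{i\neq j\neq k}A_{ij}A_{jk}$, so that $\mathcal{C}_n=T_n/P_n$. The plan is to show $T_n=\mathbb{E}[T_n](1+o_P(1))$ and $P_n=\mathbb{E}[P_n](1+o_P(1))$, and then to compute the ratio of expectations with Proposition \ref{proptria}.

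\textbf{Step 1: expectations.} Taking expectations in (\ref{trian1})--(\ref{trian3}) gives the expected contribution of each ordered triple; it depends only on the community pattern of the triple. The number of ordered triples with all three vertices in one community is $n^3[\tau^3+(1-\tau)^3]+O(n^2)=n^3[1-3\tau(1-\tau)]+O(n^2)$, and the remaining triples number $3\tau(1-\tau)n^3+O(n^2)$.

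For the 2-path sum $\sum A_{ij}A_{jk}$ the centre $j$ matters. Fix the centre's community. The two endpoints are both in the centre's community with weight $\tau^3+(1-\tau)^3$, and both in the other community with weight $\tau(1-\tau)^2+\tau^2(1-\tau)=\tau(1-\tau)$. Exactly one endpoint differs with weight $2\tau(1-\tau)$, which accounts for the ordered roles. By (\ref{trian3}) these cases contribute $4r_s^2$, $4r_d^2$ and $4r_sr_d$ respectively. Hence
\[
\mathbb{E}P_n=n^3\bigl([4-12\tau(1-\tau)]r_s^2+8\tau(1-\tau)r_sr_d+4\tau(1-\tau)r_d^2\bigr)(1+o(1)).
\]
In the same way, $\mathbb{E}T_n$ equals $n^3$ times the numerator of (\ref{gc1}) in case (I) or of (\ref{gc2}) in case (II), up to a factor $1+o(1)$. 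The $O(n^2)$ boundary corrections are negligible. Both expectations are $\Theta(n^3r_s^2)$, because $r_s=\Theta(r_d)$ and the coefficients stay bounded away from $0$ uniformly in $\tau$; one checks that the denominator is at least $c\,r_s^2$.

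\textbf{Step 2: concentration.} This is the main work. We bound $\mathrm{Var}(T_n)$ and $\mathrm{Var}(P_n)$ by expanding each as a sum over pairs of triples. Pairs of disjoint triples are independent, because the $X_i$ are independent and the $Z_i$ are deterministic, so their covariance is zero. Pairs sharing one vertex need care. For the triangle indicator we use Proposition \ref{proptria}: the conditional expectation given $X_i$ is a constant, so by conditioning on the shared vertex the covariance vanishes exactly. This is where the periodic distance pays off. For 2-paths the conditional expectation given any single $X_i$ is also constant by (\ref{trian3}), so the same argument applies.

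This leaves pairs sharing two or three vertices. There are $O(n^4)$ such pairs, each with second moment $O(r_s^3)$ (two vertices sharing an edge, with two further vertices within $O(r_s)$), and $O(n^3)$ pairs with moment $O(r_s^2)$. So
\[
\mathrm{Var}(T_n),\ \mathrm{Var}(P_n)=O(n^4r_s^3+n^3r_s^2),
\]
and dividing by $(n^3r_s^2)^2$ gives $O(1/(n^2r_s)+1/(n^3r_s^2))=o(1)$ since $nr_s\to\infty$. Chebyshev's inequality then yields relative concentration of both $T_n$ and $P_n$. The delicate point is verifying the exact cancellation for one-vertex overlaps, since a crude bound there would give $O(n^5r_s^4)$, too large relative to $n^6r_s^4$ only by the factor $1/n$. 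In fact even that crude bound is $o(1)$ after normalization, so the argument is robust.

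\textbf{Step 3: conclusion.} Write $\mathcal{C}_n=\mathbb{E}T_n(1+o_P(1))/\bigl(\mathbb{E}P_n(1+o_P(1))\bigr)$. The ratio of expectations is the displayed expression, which is bounded since its numerator is at most a constant times the denominator. This gives (\ref{gc1}) and (\ref{gc2}) with an additive $o_P(1)$ error.
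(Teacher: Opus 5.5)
Your proposal is correct and follows essentially the same route as the paper: compute the triangle and 2-path expectations from Proposition \ref{proptria}, then bound the second moments. Disjoint index pairs are independent, one-vertex overlaps vanish by conditioning on the shared $X_i$, and the remaining overlaps contribute $O(n^4r_s^3+n^3r_s^2)$, after which Chebyshev and the ratio of expectations give the result. Your side remark is also correct: the exact one-vertex cancellation is not needed, because the crude $O(n^5r_s^4)$ bound is already $O(1/n)$ after dividing by $(\mathbb{E}P_n)^2=\Theta(n^6r_s^4)$.
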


Theorem \ref{asygc} establishes the asymptotic limit of the global clustering coefficient for the Geometric Block Model \(\mathcal{G}_n(r_s, r_d)\).   Notably, these limits exhibit a phase transition at $r_s = 2r_d$, resulting in distinct functional forms for the regimes $r_s < 2r_d$ and $r_s \geq 2r_d$. This discrepancy stems from the differing triangle formation 
probabilities between the two cases, as shown in Proposition \ref{proptria}.
 In the special case where $r_s = r_d$, the model $\mathcal{G}_n(r_s, r_d)$ reduces to the standard random geometric graph without community structure. In this regime, the global clustering coefficient is asymptotically $3/4$, which is consistent with existing results (see \cite{Y25c, YF25} for example). The limits in \eqref{gc1} and \eqref{gc2} are symmetric about $\tau = 1/2$, 
since the term $\tau(1-\tau)$ is symmetric around $\tau = 1/2$. This is 
intuitively expected because one community contains $\tau n$ nodes while 
the other contains $(1-\tau)n$ nodes. Thus, choosing $\tau = t$ or 
$\tau = 1-t$ for any $t \in (0, 0.5)$ results in the same community structure. 

The conditions $r_s = \Theta(r_d)$ and $nr_s = \omega(1)$ in Theorem \ref{asygc} imply that the 
connection densities within and between communities are of the same order 
and that the expected node degree diverges as $n \to \infty$. These 
assumptions are standard in the community detection literature \cite{GMPS23}.

 The general expressions in \eqref{gc1} and \eqref{gc2} are relatively complex. The underlying behavior of the global clustering coefficient becomes more 
theoretically transparent in certain special settings: (a) the case of two 
balanced communities where $\tau = 1/2$, and (b) the case where $r_s$ is a 
constant multiple of $r_d$. The model explored in \cite{GMPS23} satisfies 
both of these conditions.

The following corollary presents the limit of the global clustering coefficient 
for $\mathcal{G}_n(r_s, r_d)$ with balanced communities.

\begin{Corollary}\label{gccor1}
    Suppose $A \sim \mathcal{G}_n(r_s, r_d)$ with $r_s = \Theta(r_d)$, $nr_s = \omega(1)$, and $\tau = 1/2$. The global clustering coefficient $\mathcal{C}_n$ satisfies the following asymptotic expressions:
    \begin{itemize}
        \item[(I)] If $r_d \leq r_s < 2r_d$, then
        \begin{equation}\label{gccoreq1}
            \mathcal{C}_n = \frac{3r_s r_d}{(r_s + r_d)^2} + o_P(1).
        \end{equation}
        \item[(II)] If $r_s \geq 2r_d$, then
        \begin{equation}\label{gccoreq2}
            \mathcal{C}_n = \frac{3(r_s^2 + 4r_d^2)}{4(r_s + r_d)^2} + o_P(1).
        \end{equation}
    \end{itemize}
\end{Corollary}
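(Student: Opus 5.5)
This is a direct specialization of Theorem \ref{asygc}, so I will substitute $\tau=1/2$ into \eqref{gc1} and \eqref{gc2} and simplify. The hypotheses of the corollary ($r_s=\Theta(r_d)$, $nr_s=\omega(1)$) are exactly those of Theorem \ref{asygc}, so its conclusions apply verbatim. The only work is algebraic; the $o_P(1)$ remainder carries over unchanged.

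With $\tau=1/2$ we have $\tau(1-\tau)=1/4$. The common denominator then becomes
\[
[4-3]r_s^2+2r_sr_d+r_d^2=(r_s+r_d)^2 .
\]
This is the key simplification: the denominator is a perfect square.

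In regime (I), $r_d\le r_s<2r_d$, the numerator of \eqref{gc1} becomes
\[
\tfrac34 r_s^2+\tfrac34(4r_sr_d-r_s^2)=3r_sr_d ,
\]
which gives \eqref{gccoreq1}.

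In regime (II), $r_s\ge 2r_d$, the numerator of \eqref{gc2} becomes
\[
\tfrac34 r_s^2+3r_d^2=\tfrac34(r_s^2+4r_d^2),
\]
which gives \eqref{gccoreq2}.

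The only point that needs care is that the remainder is unaffected by the rewriting. The limiting ratio is an exact algebraic identity in $r_s,r_d$, so the $o_P(1)$ term is the same one produced by Theorem \ref{asygc}. Since $r_s=\Theta(r_d)$, both expressions are bounded, so no rescaling issue arises. There is no genuine obstacle beyond the arithmetic; a useful check is that at $r_s=r_d$ both formulas give $3/4$, and at $r_s=2r_d$ both give $2/3$, consistent with continuity at the phase boundary.
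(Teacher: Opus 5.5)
Your proposal is correct and matches the paper, which obtains this corollary by substituting $\tau=1/2$ into Theorem~\ref{asygc}: there $\tau(1-\tau)=1/4$ collapses the common denominator to $(r_s+r_d)^2$, the numerators become $3r_sr_d$ and $\tfrac34(r_s^2+4r_d^2)$, and the $o_P(1)$ remainder carries over unchanged. Your consistency checks are also right: the limit is $3/4$ at $r_s=r_d$, and both branches give $2/3$ at $r_s=2r_d$.
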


As shown in Corollary \ref{gccor1}, the global clustering coefficient for $\mathcal{G}_n(r_s, r_d)$ with balanced communities admits simple asymptotic expressions. These limits more clearly exhibit  a phase transition at 
$r_s = 2r_d$, resulting in distinct functional forms for the regimes 
$r_s < 2r_d$ and $r_s \geq 2r_d$. For $\mathcal{G}_n(r_s, r_d)$ with an unbalanced community structure, the 
limits established in Theorem \ref{asygc} do not admit expressions as 
simple as those in Corollary \ref{gccor1}.

Theorem \ref{asygc} establishes a theoretical foundation for examining the impact of community structure on the global clustering coefficient. This offers a formal basis for the empirical observations in \cite{PVS21}, which suggest that clustering is highly correlated with community structure. To isolate and analyze this impact more effectively, we examine the special case where \(r_{s}\) is a constant multiple of \(r_{d}\).
This choice of parameters encompasses the model setting investigated in 
\cite{GMPS23}. Specifically, we let $r_s = \lambda r_d$ with $\lambda \geq 1$. 
Under this setting, $\mathcal{G}_n(r_s, r_d)$ exhibits a stronger 
community structure as $\lambda$ increases. This occurs because the 
intra-community edge probability becomes significantly higher than the 
inter-community edge probability; thus, larger values of $\lambda$ 
correspond to more pronounced community partitions.

\begin{Corollary}\label{corunbg}
Suppose $A \sim \mathcal{G}_n(r_s, r_d)$ with $r_s = \lambda r_d$ for a constant $\lambda \geq 1$ and $nr_s = \omega(1)$. The global clustering coefficient $\mathcal{C}_n$ satisfies the following asymptotic expression:
\[
\mathcal{C}_n = g(\lambda, \tau) + o_P(1),
\]
where
\begin{equation}\label{glambdta}
g(\lambda, \tau) =
\begin{cases} 
\dfrac{[3 - 9\tau(1-\tau)]\lambda^2 + 3\tau(1-\tau)(4\lambda - \lambda^2)}{[4 - 12\tau(1-\tau)]\lambda^2 + 8\tau(1-\tau)\lambda + 4\tau(1-\tau)}, & \lambda \in [1, 2), \\[12pt]
\dfrac{[3 - 9\tau(1-\tau)]\lambda^2 + 12\tau(1-\tau)}{[4 - 12\tau(1-\tau)]\lambda^2 + 8\tau(1-\tau)\lambda + 4\tau(1-\tau)}, & \lambda \ge 2.
\end{cases}
\end{equation}
\end{Corollary}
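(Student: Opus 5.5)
This corollary is a direct specialization of Theorem \ref{asygc}, so the plan is to verify the hypotheses of that theorem and then substitute $r_s=\lambda r_d$ into \eqref{gc1} and \eqref{gc2}.

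\textbf{Hypotheses.} Since $\lambda\ge 1$ is a fixed constant, $r_s=\lambda r_d$ gives $r_d\le r_s$ and $1\le r_s/r_d\le\lambda$, hence $r_s=\Theta(r_d)$. The condition $nr_s=\omega(1)$ is assumed directly. One should also note that $r_s\le 1/2$ forces $r_d\le 1/(2\lambda)$, which is compatible with Definition \ref{def1}.

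\textbf{Regimes.} The regime split $r_d\le r_s<2r_d$ versus $r_s\ge 2r_d$ translates exactly to $\lambda\in[1,2)$ versus $\lambda\ge 2$. In each case, substitute $r_s=\lambda r_d$ into the leading ratio of Theorem \ref{asygc}.

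\textbf{Case $\lambda\in[1,2)$.} Every term in the numerator and denominator of \eqref{gc1} is homogeneous of degree two in $(r_s,r_d)$: they are $r_s^2$, $r_sr_d$ and $r_d^2$. Dividing numerator and denominator by $r_d^2>0$ turns $r_s^2$ into $\lambda^2$, $r_sr_d$ into $\lambda$, and $r_d^2$ into $1$. This yields the first branch of \eqref{glambdta}.

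\textbf{Case $\lambda\ge 2$.} The same division applied to \eqref{gc2} yields the second branch.

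\textbf{Error term.} The remainder in each case is the same $o_P(1)$ term as in Theorem \ref{asygc}, so $\mathcal{C}_n=g(\lambda,\tau)+o_P(1)$.

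\textbf{Possible obstacle.} The only point needing a check is that the denominator is bounded away from zero, so that the normalized ratio is a well-defined constant. The denominator equals $[4-12\tau(1-\tau)]\lambda^2+8\tau(1-\tau)\lambda+4\tau(1-\tau)$. Since $\tau(1-\tau)\le 1/4$, the first coefficient satisfies $4-12\tau(1-\tau)\ge 1$, and the other two terms are nonnegative. Hence the denominator is at least $\lambda^2\ge 1$, and the substitution is harmless.
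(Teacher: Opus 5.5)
Your proposal is correct and takes the same route as the paper, which simply states that \eqref{glambdta} follows directly from Theorem \ref{asygc}. Both arguments substitute $r_s=\lambda r_d$, match the regimes $r_d\le r_s<2r_d$ and $r_s\ge 2r_d$ to $\lambda\in[1,2)$ and $\lambda\ge 2$, and use degree-two homogeneity to cancel $r_d^2$. Your extra checks, that $r_s=\Theta(r_d)$ holds and that the denominator is at least $\lambda^2\ge 1$, are valid but not needed.
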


The extreme values and monotonic intervals of \(g(\lambda, \tau)\) are summarized in the following proposition.

\begin{Proposition}\label{propcorunbg}
For fixed $\tau \in (0, 1)$, the function $g(\lambda, \tau)$ defined in (\ref{glambdta}) is decreasing on $[1, \lambda^*]$ and increasing on $[\lambda^*, \infty)$ with respect to $\lambda$, where
\[
\lambda^* = \frac{3}{2} + \frac{1}{2} \sqrt{\frac{9 - 11\tau(1-\tau)}{1 - 3\tau(1-\tau)}}.
\]
Moreover, $g(\lambda, \tau) < \frac{3}{4}$ for $\lambda > 1$ and $\tau \in (0,1)$, 
whereas $g(\lambda, \tau) = \frac{3}{4}$ if and only if $\lambda = 1$ or 
$\tau \in \{0, 1\}$.
\end{Proposition}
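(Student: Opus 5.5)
Write $s=\tau(1-\tau)$, so $s\in(0,1/4]$ for $\tau\in(0,1)$. Then $g(\lambda,\tau)$ in (\ref{glambdta}) is a ratio of quadratics in $\lambda$ with the common denominator
\[
D(\lambda)=(4-12s)\lambda^2+8s\lambda+4s>0 .
\]
The plan is to treat the two branches separately: compute the sign of $\partial g/\partial\lambda$ on each branch by the quotient rule, then glue them using continuity at $\lambda=2$ and the limit as $\lambda\to\infty$. Everything reduces to elementary algebra on the numerator $N'D-ND'$ of the derivative.

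\textbf{Branch $\lambda\in[1,2)$.} Here the numerator is $N_1(\lambda)=(3-12s)\lambda^2+12s\lambda$. A direct computation gives
\[
N_1'D-N_1D' = 8s\bigl[-3(1-2s)\lambda^2+(3-12s)\lambda+6s\bigr].
\]
This bracket vanishes at $\lambda=1$ and factors as
\[
-(\lambda-1)\bigl(3(1-2s)\lambda+6s\bigr),
\]
which is strictly negative for $\lambda>1$. So $g$ is strictly decreasing on $[1,2)$.

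\textbf{Branch $\lambda\ge 2$.} Here $N_2(\lambda)=(3-9s)\lambda^2+12s$, and the same computation gives
\[
N_2'D-N_2D' = 8s\bigl[(3-9s)(\lambda^2-3\lambda)-12s\bigr].
\]
Its unique positive root is
\[
\lambda^*=\tfrac32+\tfrac12\sqrt{9+\tfrac{48s}{3-9s}}=\tfrac32+\tfrac12\sqrt{\tfrac{9-11s}{1-3s}},
\]
matching the statement. The derivative is negative on $[2,\lambda^*)$ and positive on $(\lambda^*,\infty)$. Since $9-11s\ge 9(1-3s)$ is equivalent to $16s\ge0$, the square root is at least $3$, so $\lambda^*\ge 3>2$ and $\lambda^*$ indeed lies in the second branch.

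\textbf{Gluing and the bound.} Continuity at $\lambda=2$ holds because $N_1(2)=N_2(2)=12-24s$. Hence $g$ is decreasing on $[1,\lambda^*]$ and increasing on $[\lambda^*,\infty)$. For the bound, $g(1,\tau)=(3-12s+12s)/(4-12s+8s+4s)=3/4$, and
\[
\lim_{\lambda\to\infty}g(\lambda,\tau)=\frac{3-9s}{4-12s}=\frac34 .
\]
Therefore on $(1,\lambda^*]$ we have $g<g(1)=3/4$ by strict decrease. On $[\lambda^*,\infty)$, $g$ is strictly increasing to the supremum $3/4$, so again $g<3/4$. Conversely, when $\tau\in\{0,1\}$ we have $s=0$ and $g\equiv(3\lambda^2)/(4\lambda^2)=3/4$. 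Together with $g(1,\tau)=3/4$, this gives the ``if and only if'' claim.

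The only real obstacle is the bookkeeping in the two derivative numerators: spotting the factor $(\lambda-1)$ in the first branch, and checking that the root of the second branch simplifies to the stated $\lambda^*$ and exceeds $2$. Strictness of all the inequalities depends on $s>0$, which is exactly why the cases $\tau\in\{0,1\}$ are the exceptions.
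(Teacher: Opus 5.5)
Your proof is correct and follows essentially the same route as the paper: with $s=\tau(1-\tau)$ you apply the quotient rule on each branch, extract the factor $(\lambda-1)$ on $[1,2)$, and locate $\lambda^*$ as the positive root of the upward-opening quadratic on $[2,\infty)$. The only difference is that you certify $\lambda^*>2$ via $\sqrt{(9-11s)/(1-3s)}\ge 3$ rather than the paper's check $q(2)<0$, and your continuity check at $\lambda=2$ and your use of $\lim_{\lambda\to\infty}g=3/4$ (for $g<3/4$ on $[\lambda^*,\infty)$ and for the ``if and only if'' claim) fill in steps that the paper's proof leaves unstated.
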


Corollary \ref{corunbg} provides the limit of the global clustering coefficient as a function of the community structure strength parameter \(\lambda \), and Proposition \ref{propcorunbg} characterizes the properties of this limit function. When \(\lambda = 1\) or \(\tau \in \{0, 1\}\), the GBM \(\mathcal{G}_n(r_s, r_d)\) reduces to a random geometric graph without community structure. The limit is \(3/4\) in this case; otherwise, it is strictly less than \(3/4\), indicating that the presence of community structure reduces the clustering coefficient.
As the community structure strengthens with increasing \(\lambda \), the limit exhibits a non-monotonic trend, initially decreasing before eventually increasing.

The limit function \eqref{glambdta} simplifies significantly for 
$\mathcal{G}_n(r_s, r_d)$ under a balanced community structure, which 
includes one of the models studied in \cite{GMPS23}.

\begin{Corollary}\label{basper}
    Suppose $A \sim \mathcal{G}_n(r_s, r_d)$ with $r_s = \lambda r_d$ for a constant $\lambda \geq 1$, $nr_s = \omega(1)$, and $\tau = 1/2$. The global clustering coefficient $\mathcal{C}_n$ satisfies the following asymptotic expression:
    \[\mathcal{C}_n =f(\lambda)+o_P(1),\]
    where
\begin{equation}\label{flbabaln}
f(\lambda) =
\begin{cases}
 \dfrac{3\lambda}{(1+\lambda)^2}, & \ \ \lambda \in [1,2), \\[10pt]
 \dfrac{3(4+\lambda^2)}{4(1+\lambda)^2}, & \ \ \lambda \ge 2.
\end{cases}
\end{equation}
Moreover, $f(\lambda)$ decreases on \([1, 4]\), increases on \([4, \infty)\) and $f(\lambda)<f(1)=\frac{3}{4}$ for all $\lambda>1$.
\end{Corollary}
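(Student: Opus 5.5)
The plan is to get the limit from Corollary \ref{gccor1} by substitution, and then settle the monotonicity claims by differentiating each branch of $f$.

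\textbf{Step 1: the limit.} Corollary \ref{gccor1} applies, since $r_s=\lambda r_d$ with $\lambda$ constant gives $r_s=\Theta(r_d)$. Substituting $r_s=\lambda r_d$ into \eqref{gccoreq1}, the factor $r_d^2$ cancels and we get $3\lambda r_d^2/((\lambda+1)^2r_d^2)=3\lambda/(1+\lambda)^2$ for $\lambda\in[1,2)$. Substituting into \eqref{gccoreq2} gives $3(\lambda^2+4)/(4(1+\lambda)^2)$ for $\lambda\ge2$. Equivalently, one may set $\tau=1/2$ in Corollary \ref{corunbg}, where $\tau(1-\tau)=1/4$. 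As a consistency check, both branches equal $2/3$ at $\lambda=2$, so $f$ is continuous.

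\textbf{Step 2: the first branch.} For $h_1(\lambda)=3\lambda/(1+\lambda)^2$,
\[
h_1'(\lambda)=\frac{3(1+\lambda)-6\lambda}{(1+\lambda)^3}=\frac{3(1-\lambda)}{(1+\lambda)^3}.
\]
This is negative for $\lambda>1$, so $f$ is strictly decreasing on $[1,2]$.

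\textbf{Step 3: the second branch.} For $h_2(\lambda)=\frac34\,(\lambda^2+4)/(1+\lambda)^2$,
\[
\frac{d}{d\lambda}\,\frac{\lambda^2+4}{(1+\lambda)^2}=\frac{2\lambda(1+\lambda)-2(\lambda^2+4)}{(1+\lambda)^3}=\frac{2(\lambda-4)}{(1+\lambda)^3}.
\]
This is negative on $[2,4)$ and positive on $(4,\infty)$. Since $f$ is continuous at $\lambda=2$ and both pieces decrease there, $f$ decreases on $[1,4]$ and increases on $[4,\infty)$. The minimum is $f(4)=3\cdot 20/(4\cdot 25)=3/5$.

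\textbf{Step 4: the bound $f<3/4$.} On the decreasing part $(1,4]$ we have $f(\lambda)<f(1)=3/4$ directly. On $[4,\infty)$, $f$ is increasing with $\lim_{\lambda\to\infty}f(\lambda)=3/4$, so $f(\lambda)<3/4$ there too. Alternatively, $\lambda^2+4<(1+\lambda)^2$ is equivalent to $\lambda>3/2$, which holds on the whole second branch. Together these give $f(\lambda)<3/4$ for all $\lambda>1$.

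There is no real obstacle here. The only point needing care is joining the two branches at $\lambda=2$: continuity there is what lets the separate monotonicity on $[1,2]$ and $[2,4]$ combine into monotonicity on all of $[1,4]$.
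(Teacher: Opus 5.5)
Your proof is correct and follows the paper's route: the limit is Corollary \ref{gccor1} (equivalently Corollary \ref{corunbg}) at $\tau=1/2$, and the monotonicity is the branchwise derivative-sign analysis of Proposition \ref{propcorunbg} specialized to $\tau(1-\tau)=1/4$, where $\lambda^*=\frac32+\frac12\sqrt{25}=4$. Your explicit checks of continuity at $\lambda=2$ and of $\lim_{\lambda\to\infty}f(\lambda)=3/4$ (used for the bound on $[4,\infty)$) supply details the paper leaves implicit.
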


Corollary \ref{basper} provides a simple closed-form expression for the limit of the clustering coefficient as a function of the parameter \(\lambda \) in a GBM with \(\tau = 1/2\) and \(r_s = \lambda r_d\). This specific model—with \(r_d = b\log n / n\)—was previously studied in \cite{GMPS23} in the context of community detection. By Proposition \ref{propcorunbg}, the limit of the global clustering 
coefficient, $f(\lambda)$, is strictly less than $3/4$ for all $\lambda > 1$. 
The function attains its maximum of $3/4$ at $\lambda = 1$ and its minimum 
of $3/5$ at $\lambda =4$. The limit function $f(\lambda)$ decreases on the 
interval $[1, 4]$ and subsequently increases for $\lambda \in [4, \infty)$. 
These behaviors are illustrated in Figure \ref{gcp}.

\begin{figure}[h]
    \centering
    \includegraphics[width=0.5\linewidth]{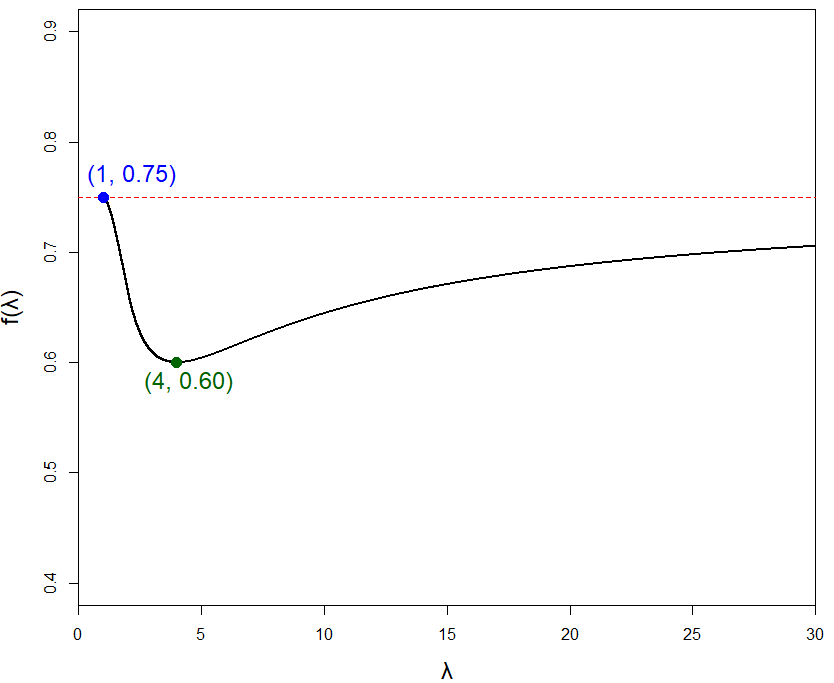}
    \caption{Plot of \(f(\lambda)\) with global maximum at \(\lambda=1\) and global minimum at \(\lambda=4\).}
    \label{gcp}
\end{figure}

\section{The average clustering coefficient of GBM}

This section provides the asymptotic expression for the average clustering coefficient of $\mathcal{G}_n(r_s, r_d)$.

\begin{Theorem}\label{asyalc}
Suppose $A \sim \mathcal{G}_n(r_s, r_d)$ with $r_s = \Theta(r_d)$ and $nr_s = \omega(1)$. The average clustering coefficient $\overline{\mathcal{C}}_n$ satisfies:
\begin{enumerate}
    \item[(I)] If $r_d \leq r_s < 2r_d$, then
    \begin{align} \nonumber
        \overline{\mathcal{C}}_n &= \frac{
      \tau \Bigl[3\tau^2 r_s^2 + (1-\tau^2)(4r_s r_d - r_s^2)\Bigr]
    }{
      4\bigl(\tau r_s + (1-\tau) r_d\bigr)^2
    } 
  + \frac{
      (1-\tau)\Bigl[3(1-\tau)^2 r_s^2 +\tau(2-\tau) (4r_s r_d - r_s^2)\Bigr]
    }{
      4\bigl(\tau r_d+(1-\tau)r_s \bigr)^2
    }\\ 
    &\quad+ o_P(1).
    \end{align}
    \item[(II)] If $r_s \geq 2r_d$, then
    \begin{align} \nonumber
        \overline{\mathcal{C}}_n &= \frac{
      \tau\Bigl[3\tau^2 r_s^2 + 4r_d^2(1-\tau^2)\Bigr]
    }{
      4\bigl(\tau r_s + (1-\tau) r_d\bigr)^2
    }  + \frac{
      (1-\tau)\Bigl[3(1-\tau)^2 r_s^2 + 4r_d^2\tau(2-\tau)\Bigr]
    }{
      4\bigl(\tau r_d+(1-\tau)r_s \bigr)^2
    }+ o_P(1).
    \end{align}
\end{enumerate}
\end{Theorem}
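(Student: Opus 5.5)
Our approach is to show that each local clustering coefficient concentrates around a deterministic value depending only on the community of the node, and then to average. Fix a node $i\in V_1$ and condition on $X_i$. By rotation invariance of the periodic distance, the conditional law of the graph around $i$ does not depend on $X_i$. This is why Proposition \ref{proptria} can state its expectations conditionally on $X_i$.

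\textbf{Step 1 (local degree).} Conditionally on $X_i$, the degree $d_i$ is a sum of independent Bernoullis. Its conditional mean is
\[
2r_s(\tau n-1)+2r_d(1-\tau)n=2n\bigl(\tau r_s+(1-\tau)r_d\bigr)(1+o(1)),
\]
which diverges since $nr_s=\omega(1)$ and $r_s=\Theta(r_d)$. Chebyshev gives $d_i/(2n(\tau r_s+(1-\tau)r_d))\to1$ in probability. The bound is uniform in $i$, via a variance bound of order $1/(nr_s)$.

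\textbf{Step 2 (local triangle count).} Let $T_i=\sum_{j\neq k}A_{ij}A_{jk}A_{ki}$. Given $X_i$, the pairs $(j,k)$ involve independent positions, so $T_i$ is a U-statistic of order two in $\{X_j\}_{j\neq i}$. Its conditional mean follows from (\ref{trian1})/(\ref{trian2}) by counting ordered pairs by community:
\begin{itemize}
\item both $j,k\in V_1$ (about $\tau^2n^2$ pairs) contribute $3r_s^2$;
\item all remaining pairs (about $(1-\tau^2)n^2$) contribute $4r_sr_d-r_s^2$ in regime (I), or $4r_d^2$ in regime (II).
\end{itemize}
So $\mathbb{E}[T_i\mid X_i]\approx n^2[3\tau^2r_s^2+(1-\tau^2)(4r_sr_d-r_s^2)]$ in case (I). For the variance, the Hoeffding decomposition shows that pairs sharing an index contribute $O(n^3r_s^3)$, and disjoint pairs are conditionally independent given $X_i$. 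Hence $\mathrm{Var}(T_i\mid X_i)=O(n^3r_s^3+n^2r_s^2)=o((n^2r_s^2)^2)$, and $T_i/(n^2 r_s^2)$ concentrates. Dividing by $d_i(d_i-1)\sim 4n^2(\tau r_s+(1-\tau)r_d)^2$ yields the first fraction in the theorem. The same argument for $i\in V_2$ swaps $\tau$ and $1-\tau$: pairs with both $j,k\in V_2$ have weight $(1-\tau)^2$, and the rest have weight $1-(1-\tau)^2=\tau(2-\tau)$. This gives the second fraction.

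\textbf{Step 3 (averaging).} Write $\overline{\mathcal C}_n=\frac1n\sum_i L_i$, with $L_i\in[0,1]$ and $L_i=\ell_{Z_i}+\epsilon_i$, where $\epsilon_i\to0$ in probability uniformly in distribution over $i$ within each community (all nodes in a community are exchangeable). Boundedness gives $\mathbb{E}|\epsilon_i|\to0$. Also, $\mathbb P(d_i\le1)\to0$ handles the convention for small degrees. Then $\mathbb{E}\bigl|\frac1n\sum_i\epsilon_i\bigr|\le\max_i\mathbb{E}|\epsilon_i|\to0$, and Markov's inequality gives $\overline{\mathcal C}_n=\tau\ell_1+(1-\tau)\ell_0+o_P(1)$, which is the claimed expression.

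The main obstacle is the ratio step. We need $T_i/(d_i(d_i-1))$ to converge, not merely the numerator and denominator separately. We handle this by the continuous mapping theorem after normalizing both by $n^2r_s^2$, which requires the denominator limit $4(\tau r_s+(1-\tau)r_d)^2/r_s^2$ to be bounded away from zero. This holds since $r_s=\Theta(r_d)$. Because $\lambda=r_s/r_d$ may vary with $n$, we state all convergences as differences tending to zero rather than as convergence to fixed limits, which is harmless since every quantity is bounded.
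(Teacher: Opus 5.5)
Your proof is correct, but it handles the ratio $T_i/(d_i(d_i-1))$ differently from the paper.

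\textbf{The paper's route.} The paper linearizes the denominator around the deterministic value $\mu_1(\mu_1-1)$.
\begin{itemize}
\item The main term $\frac1n\sum_{i\in V_1}\Delta_i/(\mu_1(\mu_1-1))$ is handled by a variance computation for the community-wide triangle sum, as in the proof of Theorem~\ref{asygc}.
\item The remainder is split into three regions: $d_i\le nr_d/M$, $d_i\ge Mnr_s$, and the bulk.
\item These are controlled by Chernoff tail bounds and Cauchy--Schwarz, using $\mathbb{E}[\Delta_1^2]=O(n^4r_s^4)$ and $\mathbb{E}[(d_1-\mu_1)^2]=O(nr_s)$.
\end{itemize}

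\textbf{Your route.} You show that each local coefficient $L_i$ converges in probability to its community-specific deterministic value. This needs only conditional Chebyshev bounds for $d_i$ and $T_i$ given $X_i$, where conditional independence of disjoint pairs kills the off-diagonal covariances. You then use $0\le T_i\le d_i(d_i-1)$, so $L_i\in[0,1]$, together with exchangeability within each community. This upgrades convergence in probability to $L^1$ convergence, and the average follows from Markov's inequality.

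\textbf{Comparison.} Your argument is more elementary and self-contained. Boundedness of $L_i$ removes the need for exponential tail bounds and for the split into degree regions. It also removes any concentration argument for sums across different nodes: no cross-node covariances ever appear. The paper's linearization, in exchange, gives explicit error rates for its pieces, e.g.\ $O_P(1/(n\sqrt{r_s}))$ for the main term and $O(1/\sqrt{nr_s})$ for the bulk remainder. Your argument as written only yields $o_P(1)$, although a polynomial rate could be extracted from your Chebyshev bounds.
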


Theorem \ref{asyalc} establishes the asymptotic limit of the average clustering coefficient for the Geometric Block Model \(\mathcal{G}_n(r_s, r_d)\).   Similar to the global case, a phase transition occurs at $r_s = 2r_d$, resulting in distinct functional forms for the regimes $r_s < 2r_d$ and $r_s \geq 2r_d$  due to variations in triangle formation probability.

The general expressions in \eqref{gc1} and \eqref{gc2} are relatively 
complex. To simplify them, we first focus on $\mathcal{G}_n(r_s, r_d)$ 
with two balanced communities ($\tau = 1/2$).

\begin{Corollary}\label{lgccor1}
    Suppose $A \sim \mathcal{G}_n(r_s, r_d)$ with $r_s = \Theta(r_d)$, $nr_s = \omega(1)$, and $\tau = 1/2$. The average clustering coefficient $\overline{\mathcal{C}}_n$ satisfies the following asymptotic expressions:
    \begin{itemize}
        \item[(I)] If $r_d \leq r_s < 2r_d$, then
        \begin{equation}\label{gccoreq1}
            \overline{\mathcal{C}}_n = \frac{3r_s r_d}{(r_s + r_d)^2} + o_P(1).
        \end{equation}
        \item[(II)] If $r_s \geq 2r_d$, then
        \begin{equation}\label{gccoreq2}
            \overline{\mathcal{C}}_n = \frac{3(r_s^2 + 4r_d^2)}{4(r_s + r_d)^2} + o_P(1).
        \end{equation}
    \end{itemize}
\end{Corollary}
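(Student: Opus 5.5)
This follows directly from Theorem \ref{asyalc} by setting $\tau=1/2$ and simplifying. The $o_P(1)$ remainder carries over unchanged, so only the algebra of the limit needs checking. At $\tau=1/2$ the two denominators coincide:
\[
4\bigl(\tau r_s+(1-\tau)r_d\bigr)^2=4\bigl(\tau r_d+(1-\tau)r_s\bigr)^2=(r_s+r_d)^2 .
\]
The numerator coefficients also coincide, since $\tau^2=(1-\tau)^2=1/4$ and $1-\tau^2=\tau(2-\tau)=3/4$. Hence the two summands of Theorem \ref{asyalc} are equal, and the limit equals twice one of them.

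In case (I), each summand equals
\[
\frac{\tfrac12\bigl[\tfrac34 r_s^2+\tfrac34(4r_sr_d-r_s^2)\bigr]}{(r_s+r_d)^2}=\frac{\tfrac32 r_sr_d}{(r_s+r_d)^2}.
\]
Doubling gives $3r_sr_d/(r_s+r_d)^2$, which is the claimed limit.

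In case (II), each summand equals
\[
\frac{\tfrac12\bigl[\tfrac34 r_s^2+3r_d^2\bigr]}{(r_s+r_d)^2}=\frac{3(r_s^2+4r_d^2)}{8(r_s+r_d)^2}.
\]
Doubling gives $3(r_s^2+4r_d^2)/\bigl(4(r_s+r_d)^2\bigr)$.

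No step poses a real obstacle: all the probabilistic work is contained in Theorem \ref{asyalc}. As a consistency check, these limits agree with Corollary \ref{gccor1}, in line with the remark that the global and average clustering limits coincide for balanced communities.
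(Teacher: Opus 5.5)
Your proposal is correct and is exactly the route the paper intends: the corollary is stated without a separate proof, as the specialization $\tau=1/2$ of Theorem~\ref{asyalc}. Your observation that the two summands coincide (prefactors $\tau=1-\tau=1/2$, common denominator $(r_s+r_d)^2$, and $1-\tau^2=\tau(2-\tau)=3/4$) is accurate, and so is the resulting simplification in both regimes.
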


Corollary~\ref{lgccor1} shows that for $\mathcal{G}_n(r_s, r_d)$ with balanced 
communities, the average clustering coefficient admits a simple asymptotic 
expression that coincides with the limit of the global clustering coefficient in Corollary \ref{gccor1}.
  In the special case where $r_s = r_d$, the model $\mathcal{G}_n(r_s, r_d)$ 
reduces to a standard random geometric graph. In this case, the average clustering coefficient asymptotically approaches 
$3/4$.

A natural question is whether the average and global clustering coefficients have the same limit  in the unbalanced case. 
To address this, we consider the scenario where $r_s$ is a constant 
multiple of $r_d$.
 
\begin{Corollary}\label{lgccor2}
Suppose $A \sim \mathcal{G}_n(r_s, r_d)$ with $r_s = \lambda r_d$ for a constant $\lambda \geq 1$ and $nr_s = \omega(1)$. The average clustering coefficient $\overline{\mathcal{C}}_n$ satisfies
\[\overline{\mathcal{C}}_n = h(\lambda,\tau) + o_P(1),\]
where, for $1 \leq \lambda < 2$:
\begin{equation}
    h(\lambda,\tau) = \frac{\tau[3\tau^2\lambda^2 + (4\lambda - \lambda^2)(1-\tau^2)]}{4(\tau\lambda + 1 - \tau)^2} + \frac{(1-\tau)[3(1-\tau)^2\lambda^2 + (4\lambda - \lambda^2)\tau(2-\tau)]}{4((1-\tau)\lambda + \tau)^2},
\end{equation}
and for $\lambda \geq 2$:
\begin{equation}
    h(\lambda,\tau) = \frac{\tau[3\tau^2\lambda^2 + 4(1-\tau^2)]}{4(\lambda\tau + 1 - \tau)^2} + \frac{(1-\tau)[3(1-\tau)^2\lambda^2 + 4\tau(2-\tau)]}{4(\tau + (1-\tau)\lambda)^2}.
\end{equation}
\end{Corollary}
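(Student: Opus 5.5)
This is a direct specialization of Theorem \ref{asyalc}, so the plan is to substitute $r_s=\lambda r_d$ and simplify. First I check the hypotheses: since $\lambda\ge 1$ is a fixed constant, $r_s=\lambda r_d$ gives $r_s=\Theta(r_d)$, and $nr_s=\omega(1)$ is assumed. So Theorem \ref{asyalc} applies. The regime $r_d\le r_s<2r_d$ is exactly $\lambda\in[1,2)$, and the regime $r_s\ge 2r_d$ is exactly $\lambda\ge 2$.

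In case (I), I substitute $r_s=\lambda r_d$ into the first summand. The numerator becomes $\tau r_d^2[3\tau^2\lambda^2+(1-\tau^2)(4\lambda-\lambda^2)]$, because $4r_sr_d-r_s^2=r_d^2(4\lambda-\lambda^2)$. The denominator becomes $4r_d^2(\tau\lambda+1-\tau)^2$. The factor $r_d^2$ cancels, since $r_d>0$, which is implied by $nr_s=\omega(1)$ for large $n$.

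The second summand is handled the same way. Its numerator becomes $(1-\tau)r_d^2[3(1-\tau)^2\lambda^2+\tau(2-\tau)(4\lambda-\lambda^2)]$ and its denominator becomes $4r_d^2((1-\tau)\lambda+\tau)^2$. After cancelling $r_d^2$, this gives exactly the stated $h(\lambda,\tau)$ for $\lambda\in[1,2)$.

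Case (II) is identical: $4r_d^2$ becomes $4$ after cancellation, which yields the second formula.

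Because the limits are deterministic and depend only on $\lambda$ and $\tau$, the $o_P(1)$ remainder carries over unchanged. There is no real obstacle here. The only point needing care is that both denominators are bounded away from zero. This holds because $\tau\lambda+1-\tau\ge 1$ and $(1-\tau)\lambda+\tau\ge 1$ when $\lambda\ge1$ and $\tau\in[0,1]$. So the cancellation is legitimate uniformly in $n$.
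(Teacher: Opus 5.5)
Your proposal is correct and follows the paper's route: the paper gives no separate proof of this corollary, which is obtained exactly as you describe. You check that $r_s=\lambda r_d$ with constant $\lambda\ge1$ satisfies the hypotheses of Theorem \ref{asyalc}, identify its two regimes with $\lambda\in[1,2)$ and $\lambda\ge 2$, substitute, and cancel the common factor $r_d^2$. Your remark about denominators being bounded below by $1$ is harmless but not needed, since after cancellation both limits are exact $n$-independent constants in $\lambda$ and $\tau$.
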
 
The limit function $h(\lambda, \tau)$ has a complex form, making it difficult 
to explicitly characterize its extreme values and monotonic intervals as was done for the global 
clustering coefficient in Corollary \ref{corunbg}. Clearly, \(h(\lambda, \tau)\) and \(g(\lambda, \tau)\) in \eqref{glambdta}  differ in functional form; however, obtaining a simple analytical expression for this difference is quite cumbersome. To highlight their difference, we plot \(h(\lambda, \tau)\) and \(g(\lambda, \tau)\) as functions of \(\tau \) for several fixed values of $\lambda\in\{1.5,2, 5,10,25, 50\}$ in Figure \ref{fighg}. The black and green curves represent $h(\lambda, \tau)$ and $g(\lambda, \tau)$, respectively, while the horizontal red dotted line represents the value $3/4$.
As illustrated, the two functions generally differ, with $h(\lambda, \tau)$ with fixed $\tau$ alternating between being larger and smaller than $g(\lambda, \tau)$.  Both functions attain a minimum near $\tau = 1/2$. For both small and large values of $\lambda$, 
both functions approach $3/4$.

\begin{figure}[p]
    \centering   \includegraphics[width=0.45\linewidth]{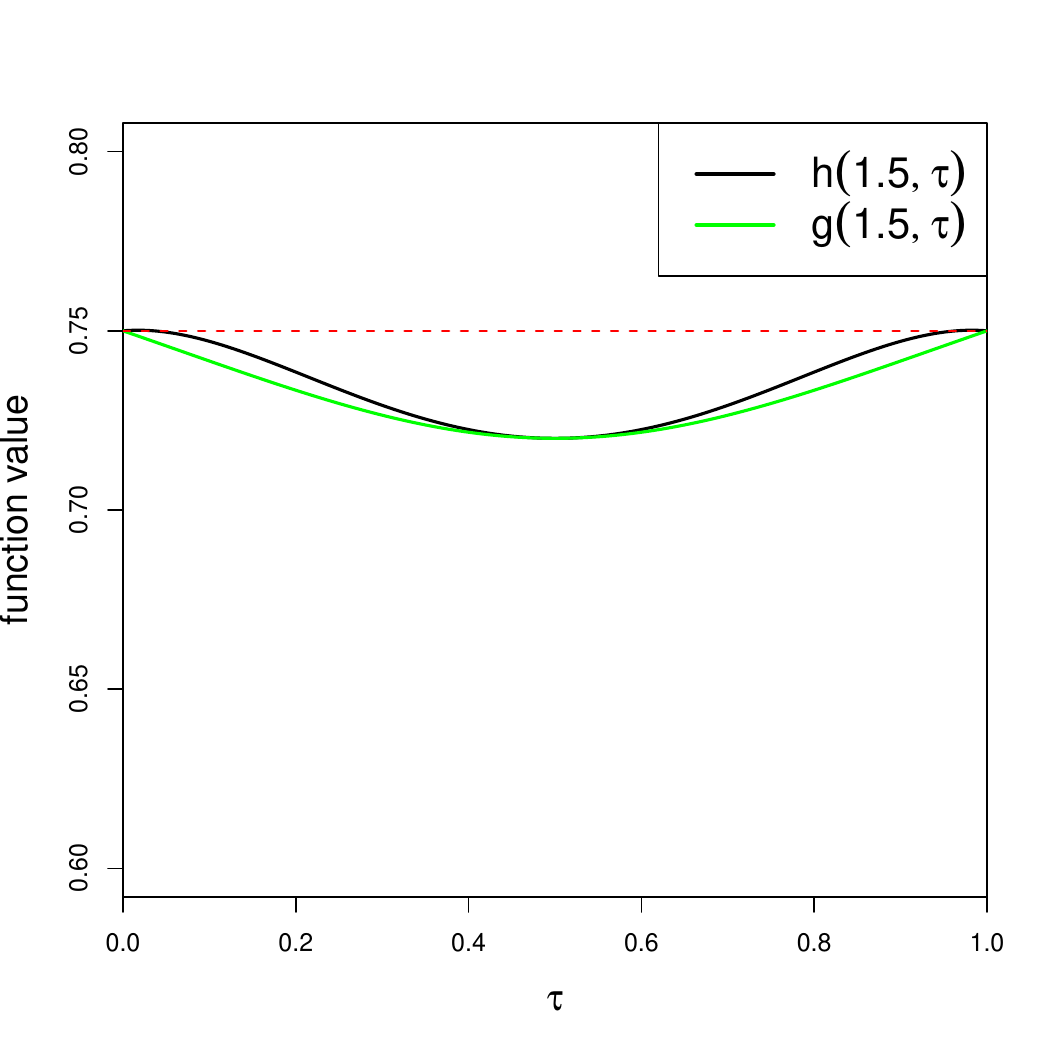}
\includegraphics[width=0.45\linewidth]{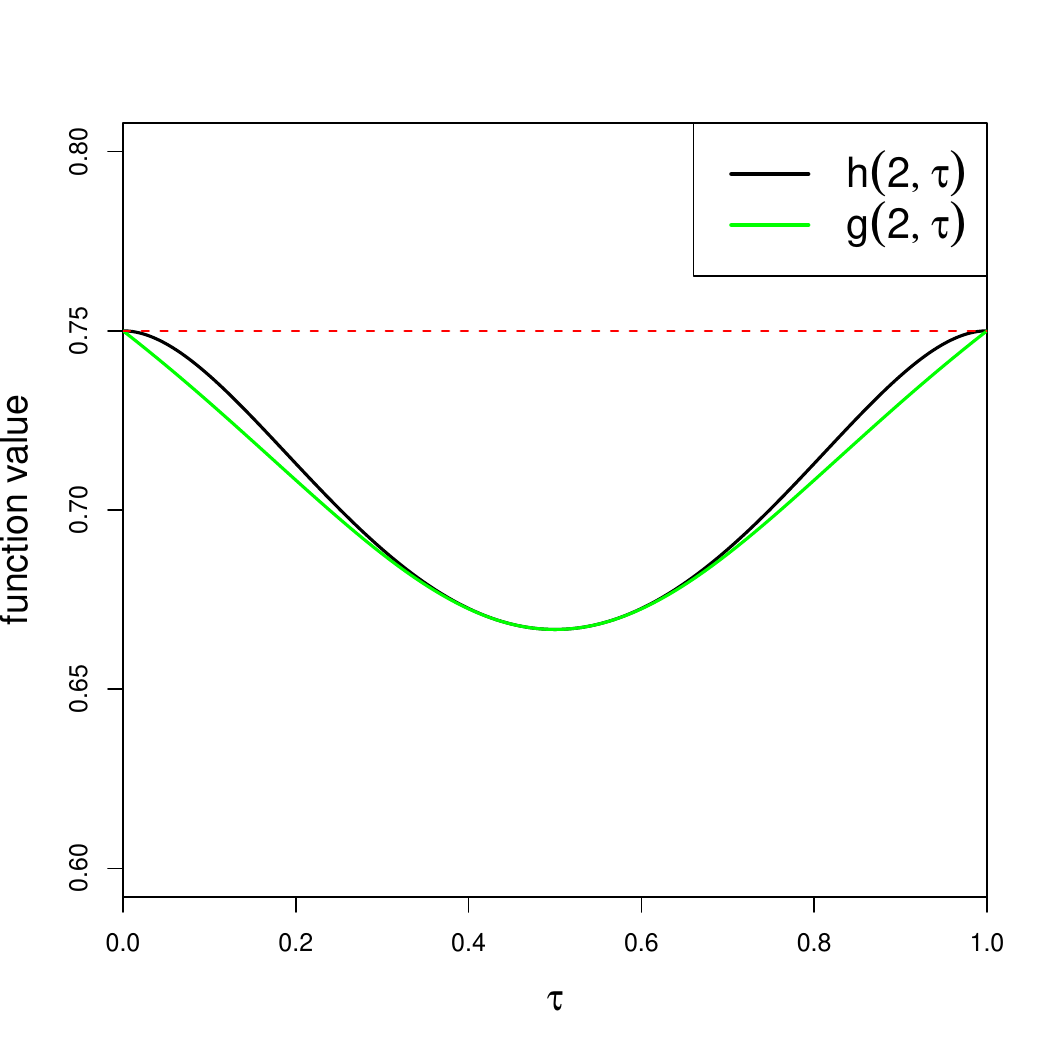}
\includegraphics[width=0.45\linewidth]{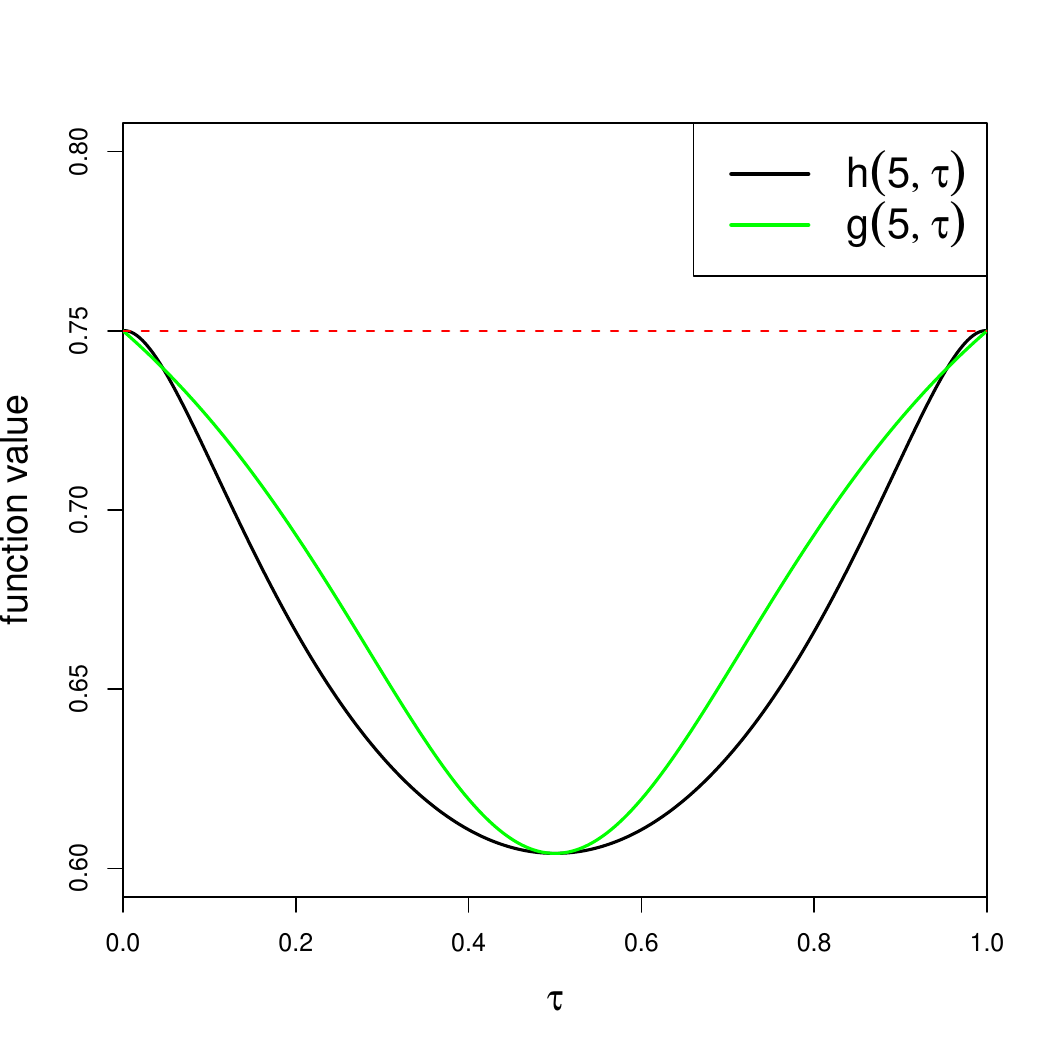}
\includegraphics[width=0.45\linewidth]{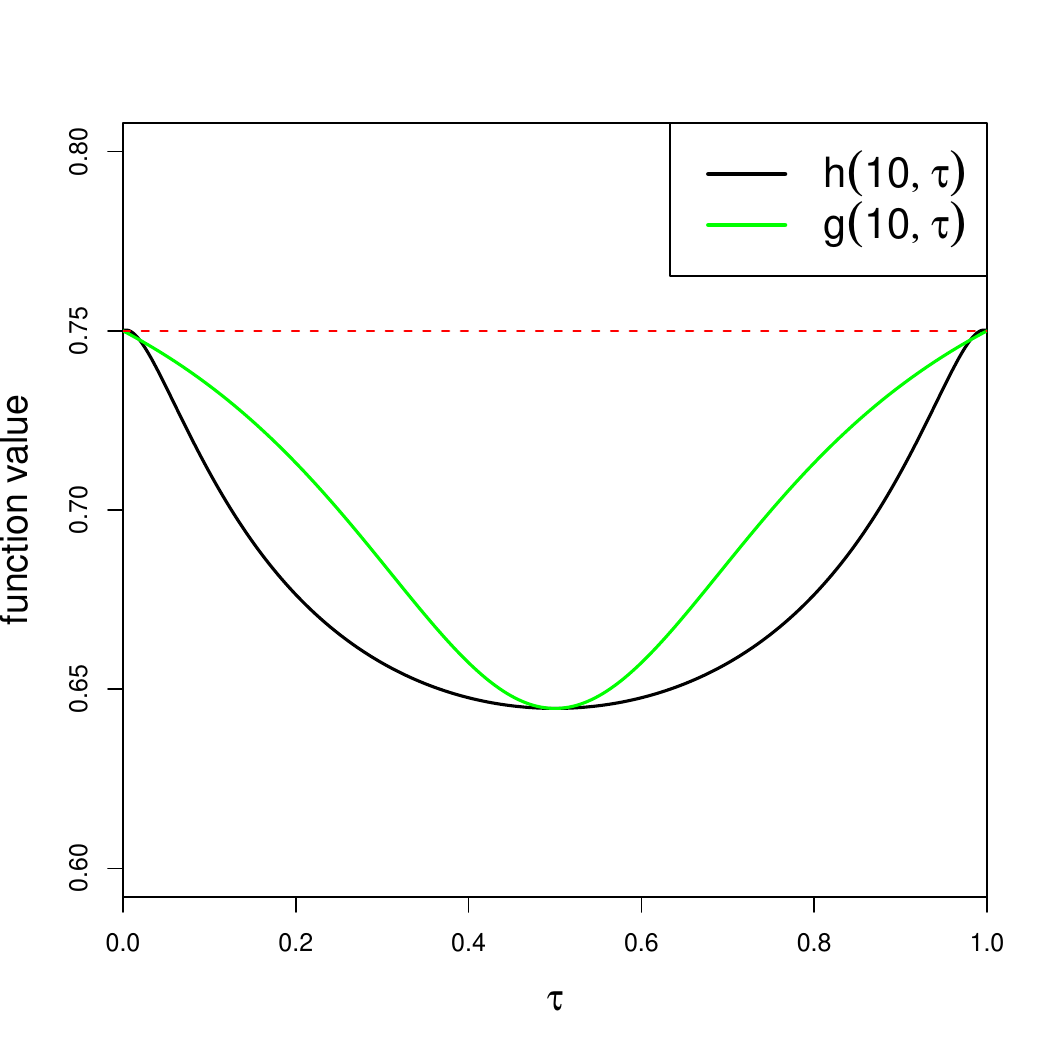}
\includegraphics[width=0.45\linewidth]{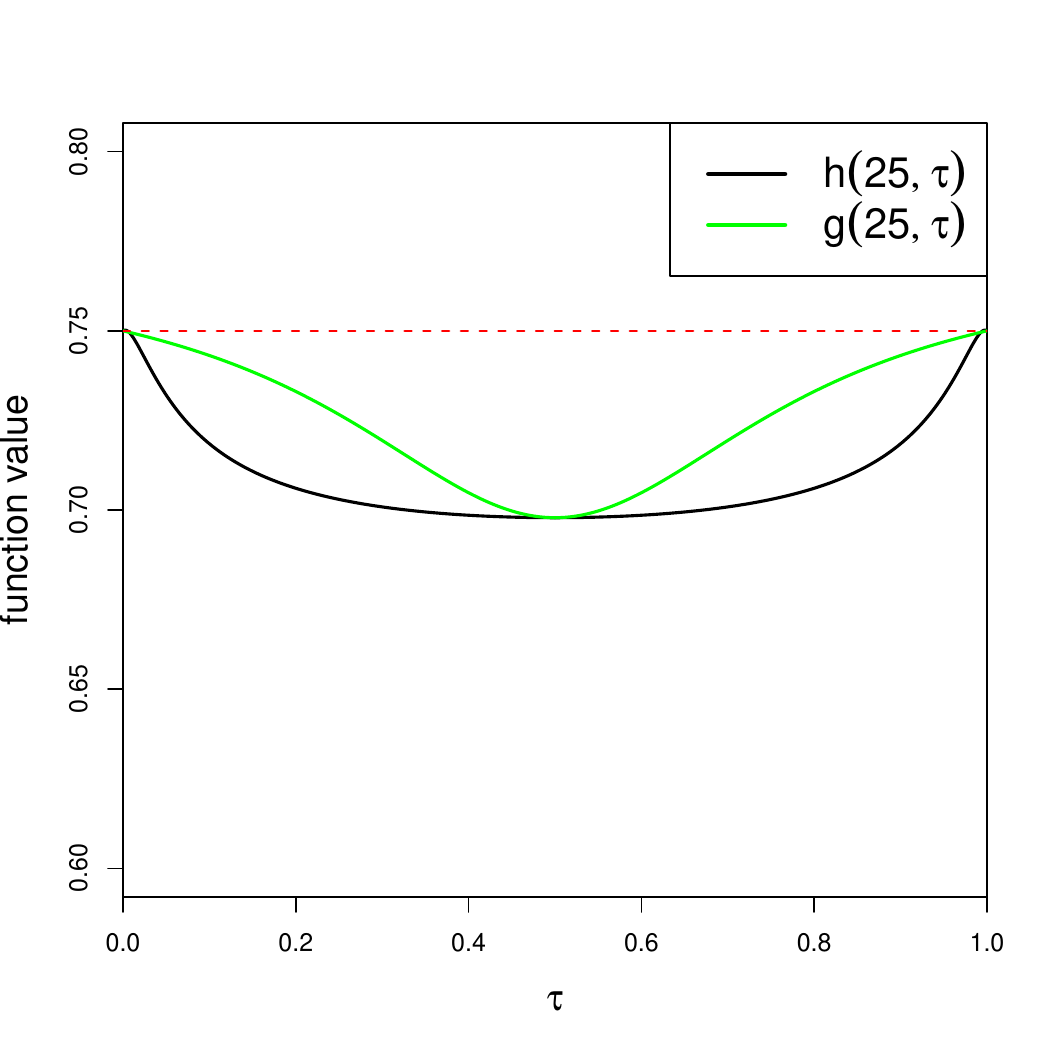}
\includegraphics[width=0.45\linewidth]{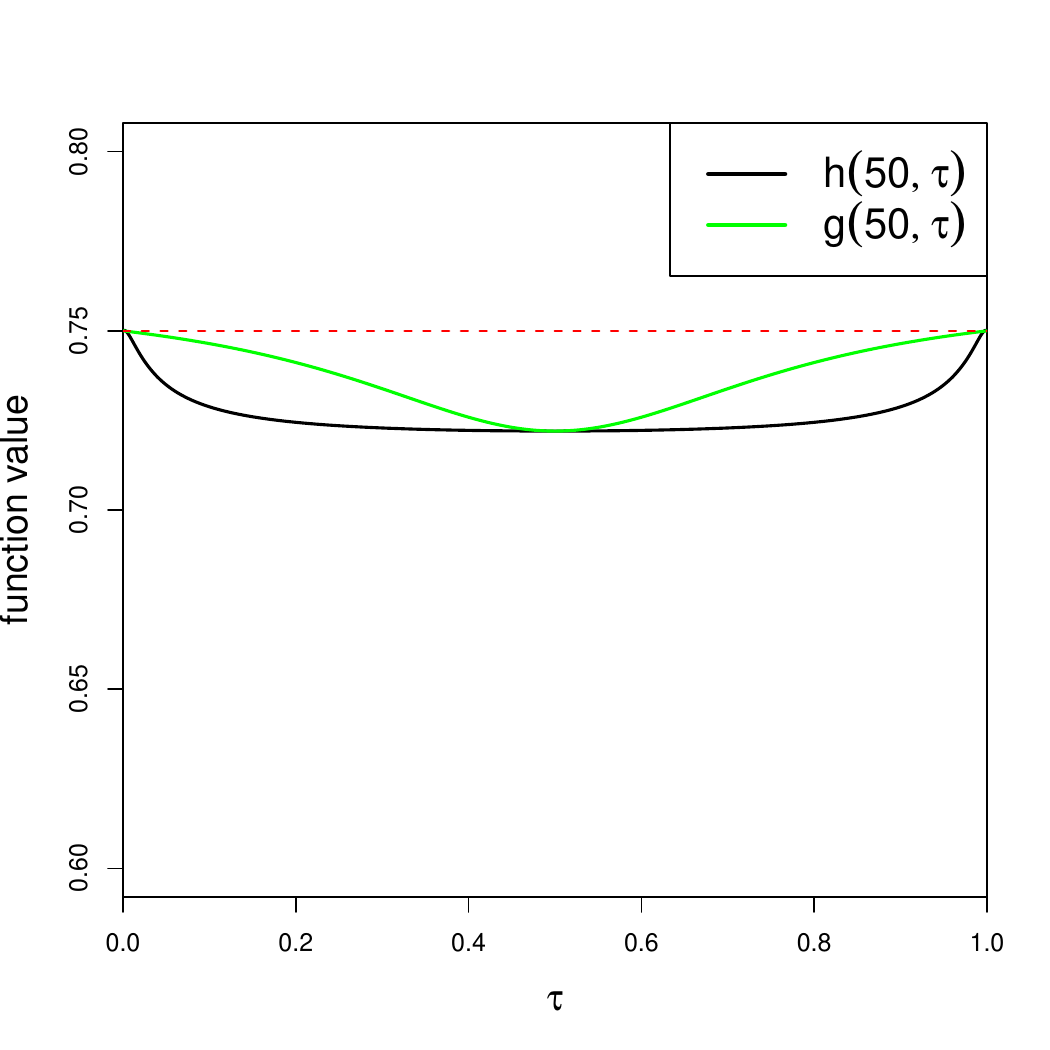}
    \caption{Graphs of $h(\lambda,\tau)$ (black) and $g(\lambda,\tau)$ (green)  for fixed values of $\lambda\in\{1.5,2, 5,10, 25,50\}$.}
    \label{fighg}
\end{figure}

Next, we consider $h(\lambda, \tau)$ as a function of $\lambda$ for a  fixed $\tau$. Interestingly, $h(\lambda, \tau)$ may exceed $3/4$  when $\tau$ is close to $0$ or $1$. Furthermore, the properties of  $h(\lambda, \tau)$ when $\tau$ is close to $0$ or $1$ differ from its  properties when $\tau$ is not close to $0$ or $1$.

	\begin{Proposition}\label{propml1}
		(a). For $ \lambda \in [1, 2)$ and fixed $\tau \in (0, 0.0670) \cup (0.9330, 1)$, the function $h(\lambda,\tau)$ defined in Corollary \ref{lgccor2} has at least one, at most two local maximums greater than $3/4$, occurring at $\lambda^* \in (1, 2)$.\\
        (b). For $\tau = 0.01$, $ h(\lambda, 0.01) $ is increasing on $(1, 1.5377)$, $(3.0224, 8.0742)$, and $(127.6016, \infty)$, and decreasing on $(1.5377, 3.0224)$ and $(8.0742, 127.6016)$.\\
        (c). For $\tau = 0.1$, $ h(\lambda, 0.1)$ is decreasing on $(1, 9.3689)$ and increasing on $(9.3689, \infty)$.
	\end{Proposition}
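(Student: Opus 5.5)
The proposition is a statement about an explicit rational function of $\lambda$ for fixed $\tau$, so the plan is a direct calculus argument. I will treat each of the two pieces of $h(\lambda,\tau)$ from Corollary \ref{lgccor2} separately.

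\emph{Part (a).} On $[1,2)$, write $h(\lambda,\tau)=\frac{\tau P_1(\lambda)}{4Q_1(\lambda)^2}+\frac{(1-\tau)P_2(\lambda)}{4Q_2(\lambda)^2}$, where:
\begin{itemize}
\item $Q_1=\tau\lambda+1-\tau$ and $Q_2=(1-\tau)\lambda+\tau$ are linear;
\item $P_1,P_2$ are quadratic in $\lambda$.
\end{itemize}
First I record $h(1,\tau)=\tfrac34$. This holds because at $\lambda=1$ both numerators equal $3\tau^2+3(1-\tau^2)=3$ and $3(1-\tau)^2+3\tau(2-\tau)=3$, and both denominators equal $4$.

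Next I differentiate. Since $\frac{d}{d\lambda}\frac{P}{Q^2}=\frac{P'Q-2PQ'}{Q^3}$ and $P'Q-2PQ'$ is again quadratic, clearing the positive factor $Q_1^3Q_2^3$ gives
\[
\operatorname{sign}\partial_\lambda h=\operatorname{sign}N(\lambda),
\]
where $N$ is a polynomial of degree at most $5$ in $\lambda$ with coefficients polynomial in $\tau$.

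The key computation is $\partial_\lambda h(1,\tau)$, which is an explicit polynomial in $\tau$. I expect it to be positive exactly when $\tau(1-\tau)$ is below the threshold $0.0625$, which corresponds to $\tau\approx0.0670$ or $\tau\approx 0.9330$. This threshold should come out as the root of a low-degree polynomial in $s=\tau(1-\tau)$, using the symmetry $\tau\leftrightarrow1-\tau$ of $h$. When $\partial_\lambda h(1,\tau)>0$, $h$ rises above $\tfrac34$ immediately after $\lambda=1$.

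For the local maximum to lie in $(1,2)$, I also need a sign change before $2$. For this I will check that the left-derivative at $\lambda=2^-$ is negative for these $\tau$; alternatively, I will compare with the $\lambda\ge2$ branch, which is continuous with the first branch at $\lambda=2$ since $4\lambda-\lambda^2=4$ there. This yields at least one interior local maximum, and it exceeds $h(1)=\tfrac34$.

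For the bound of at most two local maxima, I will use that $N$ restricted to $(1,2)$ can have at most a bounded number of sign changes. I plan to show that $N$ has at most $4$ roots in the interval, for example via Descartes' rule after the substitution $\lambda=1+t$, or via a Sturm-type count. At most $4$ roots gives at most two local maxima.

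\emph{Parts (b) and (c).} Here $\tau$ is fixed numerically, so $N(\lambda)$ becomes a concrete polynomial on each branch; on $\lambda\ge2$ it is again of degree at most $5$. The steps are:
\begin{enumerate}
\item Compute the real roots of $N$ numerically, e.g. $1.5377$ on the first branch and $3.0224$, $8.0742$, $127.6016$ on the second for $\tau=0.01$.
\item Confirm these are all the roots in the relevant range, by a Sturm sequence or by factoring out the found roots and checking that the remaining factor has no real root there.
\item Read off the signs between consecutive roots.
\item Confirm the sign as $\lambda\to\infty$ from the leading coefficient.
\item Check continuity across $\lambda=2$ so that the monotonicity intervals glue.
\end{enumerate}
For $\tau=0.1$ the same procedure should yield a single root $9.3689$, with $N<0$ on $(1,9.3689)$ and $N>0$ after.

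\emph{Main obstacle.} The hardest part is (a): rigorously certifying, uniformly in $\tau$ in the stated range, both that $\partial_\lambda h(1,\tau)>0$ with the sharp threshold $0.0670$, and the root count of the quintic on $(1,2)$. I would first try to factor $(\lambda-1)$-structure out of $N$ and reduce to the variable $s=\tau(1-\tau)$, hoping the remaining factor has controlled sign. Failing that, I would fall back on interval or numerical verification, which the decimal thresholds in the statement suggest is what is intended.
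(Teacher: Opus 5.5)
\textbf{The step that fails.} Your plan for (a) rests on a false claim. $\partial_\lambda h(1,\tau)$ is not a nonzero polynomial in $\tau$ changing sign at $\tau(1-\tau)=0.0625$; it vanishes for every $\tau$. On $[1,2)$,
\[
\partial_\lambda h(\lambda,\tau)=\frac{\tau(1-\tau)}{2}\Big[\frac{c\lambda+2(1-\tau^2)}{(\tau\lambda+1-\tau)^3}+\frac{c\lambda+2\tau(2-\tau)}{((1-\tau)\lambda+\tau)^3}\Big],\qquad c=2\tau^2-2\tau-1 .
\]
At $\lambda=1$ the two numerators are $1-2\tau$ and $2\tau-1$, and both denominators equal $1$. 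So $\lambda=1$ is a critical point for all $\tau$, and your sentence ``when $\partial_\lambda h(1,\tau)>0$, $h$ rises above $3/4$'' never applies.

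The threshold must come from second order:
\[
\partial_\lambda^2 h(1,\tau)=\frac{\tau(1-\tau)}{2}\,(16\tau^2-16\tau+1).
\]
This is positive exactly when $\tau(1-\tau)<1/16$, i.e.\ $\tau<\frac{2-\sqrt3}{4}\approx0.0670$ or $\tau>\frac{2+\sqrt3}{4}$. Together with $\partial_\lambda h(1,\tau)=0$, it gives $\partial_\lambda h>0$ on a right neighbourhood of $1$. This is the paper's argument. Your fallback of factoring $(\lambda-1)$ out of $N$ is the equivalent fix, since the quotient evaluated at $\lambda=1$ is a positive multiple of $\partial_\lambda^2 h(1,\tau)$. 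Your threshold is correct; the mechanism you state for it is not.

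\textbf{Degree of $N$.} $N$ has degree at most $4$, not $5$. For quadratic $P=a\lambda^2+\dots$ and linear $Q=q_1\lambda+q_0$, the $\lambda^2$-coefficients of $P'Q$ and $2PQ'$ are both $2aq_1$. Hence $P'Q-2PQ'$ is linear, which is why the numerators above are linear. Since $N(1)=0$, $N$ has at most three roots in $(1,2)$. Starting from $+$, the sign pattern of $\partial_\lambda h$ is then at most $+,-,+,-$, which gives at most two local maxima. This needs no Descartes or Sturm argument and is exactly the paper's count.

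\textbf{Keep your check at $\lambda=2^-$.} It holds for every $\tau\in(0,1)$: at $\lambda=2$ the two numerators equal $2\tau(\tau-2)<0$ and $2(\tau^2-1)<0$. The paper omits this step, and it is genuinely needed. Increasing just after $\lambda=1$ does not by itself place a local maximum inside $(1,2)$. With $\partial_\lambda h(2^-,\tau)<0$, the maximum of $h$ on $[1,2]$ is interior and exceeds $3/4$, so your version closes a gap in the paper's proof of (a).

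\textbf{Parts (b) and (c).} Your procedure matches the paper's: numerical roots of the quartic on each branch, a sign or second-derivative test, and continuity at $\lambda=2$. Note that $\lambda=1$ is also a root on the first branch for both $\tau=0.01$ and $\tau=0.1$. This is harmless, since the intervals in the statement are open.
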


The function $h(\lambda, 0.01)$ is illustrated in Figure~\ref{0.01},  while the function $h(\lambda, 0.1)$ is shown in Figure~\ref{0.1}.  These figures illustrate the difference between $h(\lambda, \tau)$  and $g(\lambda, \tau)$, thereby highlighting the distinction between 
the average clustering coefficient and the global clustering coefficient.

	\begin{figure}[h]
		\begin{minipage}{\textwidth}
			\centering
			\includegraphics[width=0.75\linewidth]{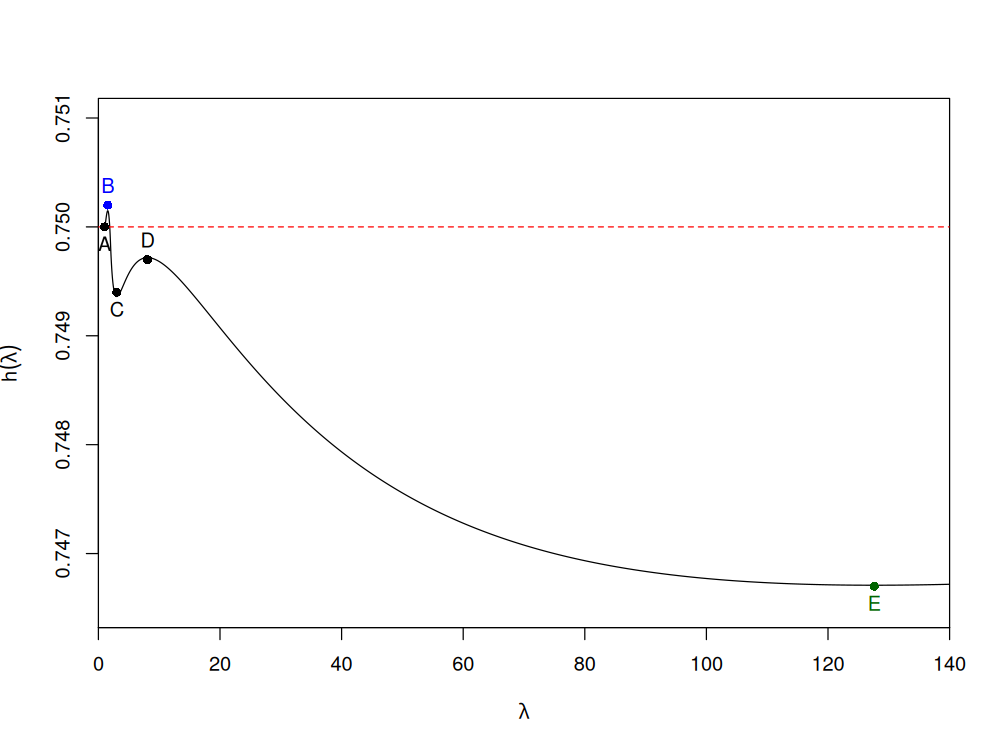}
			\caption{Plot of $h(\lambda, 0.01)$: $A(1,0.75)$, $B(1.5377, 0.7502)$, $C(3.0224,0.7494)$,  $D(8.0742,0.7497)$, and $E(127.6016, 0.7467)$.    }
			\label{0.01}
		\end{minipage}
	\end{figure}

	\begin{figure}[h]
		\begin{minipage}{\textwidth}
			\centering
			\includegraphics[width=0.75\linewidth]{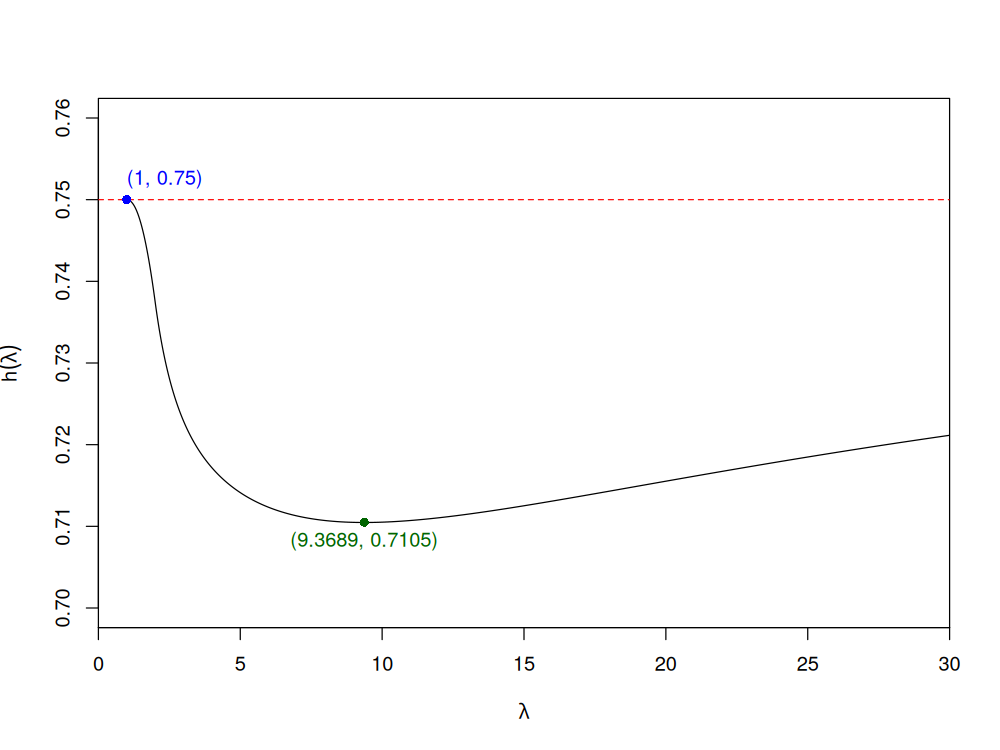}
			\caption{Plot of $h(\lambda, 0.1)$: global maximum at  $\lambda = 1$ and global minimum at $\lambda = 9.3689$.}
			\label{0.1}
		\end{minipage}
	\end{figure}

\section{Proof of main results}

In this section, we provide detailed proofs of the main results.

\subsection{Proof of Proposition \ref{proptria}}

{\bf Proof of (I)}. Suppose $r_d \leq r_s < 2r_d$.   If nodes 1, 2, 3 are in the same community, that is, $Z_1=Z_2=Z_3$, then $\mathbb{E}[A_{12}A_{23}A_{31}|X_i]=3r_s^2$ by Lemma 3.1 in \cite{Y25c}. We only need to consider the case where the three nodes belong to different 
communities. Since there are only two communities, two nodes must share 
the same community while the third is in another. Without loss of 
generality, we assume node 1 and node 2 share a community and node 3 
is in another, that is, $Z_1 = Z_2 \neq Z_3$. 

Note that by symmetry, $\mathbb{E}[A_{12}A_{23}A_{31}|X_1] = \mathbb{E}[A_{12}A_{23}A_{31}|X_2]$; 
thus, we only need to compute $\mathbb{E}[A_{12}A_{23}A_{31}|X_1]$ and 
$\mathbb{E}[A_{12}A_{23}A_{31}|X_3]$. We first evaluate the latter. 
Since the distance $d(X_i, X_j)$ is translation-invariant and the 
$X_i$ are uniformly distributed, $\mathbb{E}[A_{12}A_{23}A_{31}|X_3]$ 
is independent of the specific value of $X_3$. Without loss of 
generality, let $X_3 = x = 0.5$.

Consider the following four scenarios: (a) $X_1<x,X_2<x$; (b) $X_1<x<X_2$; (c) $X_1>x,X_2>x$;(d) $X_2<x<X_1$. Then
\begin{eqnarray}\nonumber
    \mathbb{E}[A_{12}A_{23}A_{31}|X_3=x]&=&\int_{(a)}A_{12}A_{23}A_{31}d_{X_1}d_{X_2}+\int_{(b)}A_{12}A_{23}A_{31}d_{X_1}d_{X_2}\\   \nonumber
    &&+\int_{(c)}A_{12}A_{23}A_{31}d_{X_1}d_{X_2}+\int_{(d)}A_{12}A_{23}A_{31}d_{X_1}d_{X_2}\\ \label{lempreq1} 
    &=&2\int_{(a)}A_{12}A_{23}A_{31}d_{X_1}d_{X_2}+2\int_{(b)}A_{12}A_{23}A_{31}d_{X_1}d_{X_2}.
\end{eqnarray}

 Note that $r_s\geq r_d$. For Case (a), we have
\begin{eqnarray}\label{lempreq2}   \int_{(a)}A_{12}A_{23}A_{31}d_{X_1}d_{X_2}=\int_{x-r_d}^{x}\int_{x-r_d}^{x}d_{X_1}d_{X_2}=r_d^2.
\end{eqnarray}

For Case (b), $A_{12}A_{23}A_{31}=1$ if and only if $x-r_d<X_1<x-(r_s-r_d)$ and $x<X_2<X_1+r_s$  or $x-(r_s-r_d)<X_1<x$ and $x<X_2<x+r_d$. Straightforward calculation yields
\begin{align*}
\int_{x-r_d}^{x-(r_s-r_d)} \int_{x}^{x_1+r_s} dx_2   dx_1
&= \int_{x-r_d}^{x-(r_s-r_d)} (x_1 + r_s - x)  dx_1 \\
&= \left[ \frac{x_1^2}{2} \right]_{x-r_d}^{x-(r_s-r_d)}
+ (r_s - x)(2r_d - r_s) \\
&= \frac{1}{2} \Big[
x^2 - 2x(r_s - r_d) + (r_s - r_d)^2 
- x^2 + 2xr_d - r_d^2
\Big] \\
&\quad + r_s(2r_d - r_s) - x(2r_d - r_s) \\
&= \frac{1}{2} \Big[
(r_s - r_d)^2 - r_d^2
\Big] + r_s(2r_d - r_s) \\
&= \frac{r_s(2r_d - r_s)}{2},
\\
\int_{x-(r_s - r_d)}^{x} \int_{x}^{x + r_d} dx_2   dx_1
&= \int_{x-(r_s - r_d)}^{x} r_d   dx_1 \\
&= r_d (r_s - r_d).
\end{align*}
Then we have
\begin{align}\label{nlempreq3}
\int_{(b)}A_{12}A_{23}A_{31}d_{X_1}d_{X_2}=\frac{r_s(2r_d - r_s)}{2} + r_d(r_s - r_d)= \frac{4r_s r_d - r_s^2 - 2r_d^2}{2}.
\end{align}

Combining (\ref{lempreq1}), (\ref{lempreq2}) and (\ref{nlempreq3})  yields
\begin{eqnarray}\nonumber
    \mathbb{E}[A_{12}A_{23}A_{31}|X_3=x]=2  r_d^2 + 2  \frac{4r_s r_d - r_s^2 - 2r_d^2}{2}=4r_s r_d - r_s^2.
\end{eqnarray}

Now we calculate $\mathbb{E}[A_{12}A_{23}A_{31}|X_1=x]$. Similar to the previous case, we have 
\begin{eqnarray}\label{sbmeq1}
    \mathbb{E}[A_{12}A_{23}A_{31}|X_1=x]&=&2\int_{(a)}A_{12}A_{23}A_{31}d_{X_1}d_{X_2}+2\int_{(b)}A_{12}A_{23}A_{31}d_{X_1}d_{X_2},
\end{eqnarray}
where (a) $X_3<x,X_2<x$ and (b) $X_2<x<X_3$.

Note that $r_s\geq r_d$. For case (b),  we have
\begin{align}\nonumber
\int_{(b)}A_{12}A_{23}A_{31}d_{X_1}d_{X_2}&=\int_{x}^{x+r_d} dx_3 \int_{x_3-r_d}^{x} dx_2\\ \nonumber
&= \int_{x}^{x+r_d} (x + r_d - x_3)  dx_3 \\ \nonumber
&= (x+r_d)r_d - \frac{1}{2}(x^2 + 2xr_d + r_d^2 - x^2) \\ \nonumber
&= r_d^2 - \frac{r_d^2}{2} \\ \label{sbmeq2}
&= \frac{r_d^2}{2}.
\end{align}

For Case (a), $A_{12}A_{23}A_{31}=1$ if and only if $x-r_s<X_2<x-r_d$ and $x-r_d<X_3<X_2+r_d$  or $x-r_d<X_2<x$ and $x-r_d<X_3<x$. Straightforward calculation yields
\begin{align}\nonumber
&\int_{(a)}A_{12}A_{23}A_{31}d_{X_1}d_{X_2}\\ \nonumber
&=\int_{x-r_s}^{x-r_d} dx_2 \int_{x-r_d}^{x_2+r_d} dx_3
+
\int_{x-r_d}^{x} dx_2 \int_{x-r_d}^{x} dx_3\\ \nonumber
&=
\int_{x-r_s}^{x-r_d} (x_2 + 2r_d - x)  dx_2 + r_d^2
\\ \nonumber
&=
\frac{x_2^2}{2}\Big|_{x-r_s}^{x-r_d}
+ (2r_d - x)(r_s - r_d) + r_d^2\\ \nonumber
&= \frac{x^2 - 2r_dx + r_d^2 - x^2 + 2xr_s - r_s^2}{2}
+ 2r_d(r_s - r_d) - x(r_s - r_d) + r_d^2 \\ \nonumber
&= x(r_s - r_d) - \frac{r_s^2 - r_d^2}{2}
+ 2r_d r_s - 2r_d^2 + r_d^2 - x(r_s - r_d) \\ \label{sbmeq3}
&= 2r_s r_d - \frac{r_s^2}{2} - \frac{r_d^2}{2}.
\end{align}

Combining (\ref{sbmeq1}), (\ref{sbmeq2}) and (\ref{sbmeq3})  yields
\begin{eqnarray}\nonumber
    \mathbb{E}[A_{12}A_{23}A_{31}|X_1=x]=4r_s r_d - r_s^2.
\end{eqnarray}
Then the proof of case (I) is complete.

\medskip 

{\bf Proof of (II)}. The case (II) follows from Lemma 3.1 in \cite{Y25c}. 

\medskip 

{\bf Proof of (III)}. Firstly, we consider $\mathbb{E}\left[A_{12}A_{13}\mid X_1\right]$.  By Lemma 3.1 in \cite{Y25c}, $\mathbb{E}  \left[A_{12}\mid X_1\right]=2r_s$ if $Z_1=Z_2$ and $\mathbb{E}  \left[A_{12}\mid X_1\right]=2r_d$ if $Z_1\neq Z_2$.  Note that $X_1,X_2,X_3$ are independent. 
If $Z_1 = Z_2 = Z_3$, then
\begin{align*}
\mathbb{E}\left[A_{12}A_{13}\mid X_1\right]
 =\mathbb{E}  \left[A_{12}\mid X_1\right]
\mathbb{E}  \left[A_{13}\mid X_1\right]
=(2r_s)(2r_s)= 4r_s^2.
\end{align*}
If $Z_1 = Z_2 \ne Z_3$ or $Z_1=Z_3 \ne Z_2$, then
\begin{align*}
\mathbb{E}\left[A_{12}A_{13}\mid X_1\right]
 =\mathbb{E}  \left[A_{12}\mid X_1\right]
\mathbb{E}  \left[A_{13}\mid X_1\right]=(2r_s)(2r_d)
= 4r_s r_d.
\end{align*}
If $Z_2 = Z_3 \ne Z_1$, then
\begin{align*}
\mathbb{E}\left[A_{12}A_{13}\mid X_1\right]
 =\mathbb{E}  \left[A_{12}\mid X_1\right]
\mathbb{E}  \left[A_{13}\mid X_1\right]=(2r_d)(2r_d)
= 4r_d^2.
\end{align*}

Now, we consider $\mathbb{E}\left[A_{12}A_{13}\mid X_2\right]$.
Note that
\begin{align*}
\mathbb{E}\left[A_{12}A_{13}\mid X_2\right]&=\mathbb{E}\left[\mathbb{E}\left[A_{12}A_{13}\mid X_1, X_2\right]\mid X_2\right]
 =\mathbb{E}\left[A_{12}\mathbb{E}\left[A_{13}\mid X_1\right]\mid X_2\right]\\
 &=\big(2r_s\mathbb{I}[Z_1=Z_3]+2r_d\mathbb{I}[Z_1\neq Z_3]\big)\mathbb{E}\left[A_{12}\mid X_2\right]
 .
\end{align*}
If $Z_1 = Z_2 = Z_3$, then 
\begin{align*}
\mathbb{E}\left[A_{12}A_{13}\mid X_2\right]=2r_s\mathbb{I}[Z_1=Z_3]\mathbb{E}\left[A_{12}\mid X_2\right]=2r_s\mathbb{I}[Z_1=Z_3]2r_s=4r_s^2.
\end{align*}
If $Z_1 = Z_2 \ne Z_3$, then
\begin{align*}
\mathbb{E}\left[A_{12}A_{13}\mid X_2\right]=2r_d\mathbb{I}[Z_1\neq Z_3]\mathbb{E}\left[A_{12}\mid X_2\right]=2r_d\mathbb{I}[Z_1\neq Z_3]2r_s
= 4r_s r_d.
\end{align*}
If $Z_1=Z_3 \ne Z_2$, then
\begin{align*}
\mathbb{E}\left[A_{12}A_{13}\mid X_2\right]=2r_s\mathbb{I}[Z_1=Z_3]\mathbb{E}\left[A_{12}\mid X_2\right]=2r_s2r_d
= 4r_s r_d.
\end{align*}
If $Z_2 = Z_3 \ne Z_1$, then  
\begin{align*}
\mathbb{E}\left[A_{12}A_{13}\mid X_1\right]
 =2r_d\mathbb{I}[Z_1\neq Z_3]\mathbb{E}\left[A_{12}\mid X_2\right]=2r_d2r_d
= 4r_d^2.
\end{align*}
Note that $\mathbb{E}\left[A_{12}A_{13}\mid X_2\right]=\mathbb{E}\left[A_{12}A_{13}\mid X_3\right]$.
In summary, the proof of (III) is complete. Then the proof of Proposition \ref{proptria} is complete.

\qed

\subsection{Proof of Theorem \ref{asygc}}
 Without loss of generality, let $V_1=\{1,2,\dots,\tau n\}$ and $V_2=\{\tau n+1,\tau n+2,\dots, n\}$. 
To prove Theorem \ref{asygc}, it suffices to establish the following 
asymptotic relations:
\begin{align}\label{thempreq1}
    \sum_{i \ne j \ne k} A_{ij}A_{ik} &= (1+o_P(1))\sum_{i \ne j \ne k} \mathbb{E}[A_{ij}A_{ik}], \\ 
    \label{thempreq2}
    \sum_{i \ne j \ne k} A_{ij}A_{ik}A_{kj} &= (1+o_P(1))\sum_{i \ne j \ne k} \mathbb{E}[A_{ij}A_{ik}A_{kj}],
\end{align}
and subsequently evaluate the sums of the expectations on the 
right-hand side of \eqref{thempreq1} and \eqref{thempreq2}.

First, we evaluate the leading-order term of the sum of expectations on the 
right-hand side of \eqref{thempreq1} and establish the convergence in 
\eqref{thempreq1}.
Recall that $\tau \in [0,1]$ is a fixed constant and $r_s = \Theta(r_d)$ 
by assumption. By Proposition \ref{proptria}, the sum of the expectations 
in \eqref{thempreq1} is evaluated as follows:
\begin{align}\nonumber
\sum_{i \ne j \ne k} \mathbb{E}[A_{ij}A_{ik}]
&= \Bigl[(\tau n)(\tau n-1)(\tau n-2) + (1-\tau)n((1-\tau)n-1)((1-\tau)n-2)\Bigr] 4r_s^2 \\ \nonumber
  &\quad + 2\Bigl[(\tau n)(\tau n-1)(1-\tau)n + \tau n(1-\tau)n((1-\tau)n-1)\Bigr] 4r_s r_d \\ \nonumber
  &\quad +\Bigl[(\tau n)(\tau n-1)(1-\tau)n + \tau n(1-\tau)n((1-\tau)n-1)\Bigr] 4r_d^2 \\ \nonumber
&= \Bigl[\tau^3 n^3+ (1-\tau)^3 n^3\Bigr] 4r_s^2 + 2\Bigl(\tau^2 n^2 (1-\tau)n + \tau n(1-\tau)^2 n^2\Bigr) 4r_s r_d \\  \nonumber
  &\quad + \Bigl[\tau^2 n^2(1-\tau)n + \tau n(1-\tau)^2 n^2\Bigr] 4r_d^2 +O(n^2r_s^2)\\ \label{exp2pathe}
  &= n^3\Big[\bigl(1 -3\tau(1-\tau)\bigr) 4r_s^2
    + 8 \tau(1-\tau)  r_s r_d
    + 4 \tau(1-\tau)  r_d^2\Big] +O(n^2r_s^2).
\end{align}

We use Markov's inequality to prove (\ref{thempreq1}). To this end, we evaluate the variance of the left-hand side of (\ref{thempreq1}).  For convenience, let $P_n
= \sum_{i \ne j \ne k} \bar{P}_{ijk}$, where $\bar{P}_{ijk}
=A_{ij}A_{jk} - \mathbb{E}[A_{ij}A_{jk}]$. Then
\begin{align}\label{secmom}
\mathbb{E}  \left[P_n^2\right]
&= \sum_{i \ne j \ne k} \sum_{i_1 \ne j_1 \ne k_1}
\mathbb{E}  \left[\bar{P}_{ijk} \bar{P}_{i_1 j_1 k_1}\right].
\end{align}
If $\{i,j,k\}$ and $\{i_1,j_1,k_1\}$ are disjoint, then  $\bar{P}_{ijk}$ and $\bar{P}_{i_1 j_1 k_1}$ are independent. Hence, the $\mathbb{E}  \left[\bar{P}_{ijk} \bar{P}_{i_1 j_1 k_1}\right]=0$. 

Next, we show that if $\{i,j,k\}$ and $\{i_1,j_1,k_1\}$ have exactly 
one vertex in common, the expectation $\mathbb{E} \left[\bar{P}_{ijk} 
\bar{P}_{i_1 j_1 k_1}\right]$ is zero.
 To this end, we only need to show  $\mathbb{E}  \left[\bar{P}_{123} \bar{P}_{145}\right]=0$,  $\mathbb{E}  \left[\bar{P}_{123} \bar{P}_{415}\right]=0$ and $\mathbb{E}\left[\bar{P}_{123} \bar{P}_{425}\right]=0$.  Note that
\begin{align*}
\mathbb{E}  \left[\bar{P}_{123} \bar{P}_{145}\right]
&= \mathbb{E}  \left[
\mathbb{E}  \left[\bar{P}_{123}\mid X_1\right]
\mathbb{E}  \left[\bar{P}_{145}\mid X_1\right]
\right].
\end{align*}
By Proposition \ref{proptria}, $\mathbb{E}  \left[\bar{P}_{123}\mid X_1\right]=0$. Then $\mathbb{E}  \left[\bar{P}_{123} \bar{P}_{145}\right]=0$. Similar arguments apply to the remaining cases where the index sets share only one node. Consequently,  $\mathbb{E}[\bar{P}_{ijk} \bar{P}_{i_1 j_1 k_1}] 
\neq 0$ only if $\{i,j,k\}$ and $\{i_1,j_1,k_1\}$ share at least two nodes. By (\ref{secmom}), we have
\begin{align*}
\mathbb{E}  \left[P_n^2\right]
&= \sum_{i \ne j \ne k \ne l}
\Big(
4 \mathbb{E}  \left[\bar{P}_{ijk}\bar{P}_{ijl}\right]
+ 4 \mathbb{E}  \left[\bar{P}_{ijk}\bar{P}_{ilj}\right]
+ 2 \mathbb{E}  \left[\bar{P}_{ikj}\bar{P}_{il j}\right]
+ 4 \mathbb{E}  \left[\bar{P}_{ijk}\bar{P}_{l ij}\right]
\Big) \\
&\quad + \sum_{i \ne j \ne k}
\Big(
2 \mathbb{E}  \left[\bar{P}_{ijk}^2\right]
+ 4 \mathbb{E}  \left[\bar{P}_{ijk}\bar{P}_{ikj}\right]
\Big).
\end{align*}
By a similar argument as in the proof of (III) of Proposition \ref{proptria}, it is easy to verify that $\mathbb{E}  \left[A_{12}A_{13}A_{14}\right]=O(r_s^3)$. Then
\[|\mathbb{E}  \left[\bar{P}_{ijk}\bar{P}_{ijl}\right]|=\mathbb{E}  \left[A_{ij}A_{jk}A_{jl}\right]+O(r_s^4)=O(r_s^3)+O(r_s^4).\]
Similarly, we have
\[|\mathbb{E}  \left[\bar{P}_{ijk}\bar{P}_{ilj}\right]|=\mathbb{E}  \left[A_{ij}A_{jk}A_{il}A_{jl}\right]+O(r_s^4)\leq \mathbb{E}  \left[A_{ij}A_{jk}A_{jl}\right]+O(r_s^4)=O(r_s^3)+O(r_s^4),\]
\[|\mathbb{E}  \left[\bar{P}_{ikj}\bar{P}_{ilj}\right]|=\mathbb{E}  \left[A_{ik}A_{kj}A_{il}A_{jl}\right]+O(r_s^4)\leq \mathbb{E}  \left[A_{ik}A_{jk}A_{jl}\right]+O(r_s^4)=O(r_s^3)+O(r_s^4),\]
\[|\mathbb{E}  \left[\bar{P}_{ijk}\bar{P}_{lij}\right]|=\mathbb{E}  \left[A_{ij}A_{jk}A_{li}\right]+O(r_s^4)=O(r_s^3)+O(r_s^4),\]
\[\mathbb{E}  \left[\bar{P}_{ijk}^2\right]=\mathbb{E}  \left[A_{ij}A_{jk}\right]+O(r_s^4)=O(r_s^2),\]
\[|\mathbb{E}  \left[\bar{P}_{ijk}\bar{P}_{ikj}\right]|\leq \mathbb{E}  \left[A_{ij}A_{jk}\right]+O(r_s^3)=O(r_s^2).\]
Therefore, we get
\begin{align*}
\mathbb{E}  \left[P_n^2\right]= O\left(n^4 r_s^3\right) + O  \left(n^3 r_s^2\right).
\end{align*}
By \eqref{exp2pathe} and the assumption $nr_s = \omega(1)$, we have 
$\sum_{i \ne j \ne k} \mathbb{E}[A_{ij}A_{ik}] = \Theta(n^3r_s^2)$. 
By Markov's inequality, we obtain
\[\mathbb{P}\left(\left|\frac{P_n}{\sum_{i \ne j \ne k} \mathbb{E}[A_{ij}A_{ik}]}\right|>\epsilon\right)\leq \frac{1}{\epsilon^2}\frac{\mathbb{E}\left[P_n^2\right]}{(\sum_{i \ne j \ne k} \mathbb{E}[A_{ij}A_{ik}])^2}=O\left(\frac{1}{n^2r_n^2}\right)=o(1).\]
Then (\ref{thempreq1}) holds.

Next, we derive the leading-order term of the sum of expectations on the 
right-hand side of \eqref{thempreq2} and establish \eqref{thempreq2} 
by applying Markov's inequality.

If $r_d \leq r_s < 2r_d$, by Proposition \ref{proptria}, we have
\begin{align}\nonumber
\sum_{i\ne j\ne k} \mathbb{E}  \left[A_{ij}A_{jk}A_{ki}\right]
  &=\bigl(\tau n (\tau n-1)(\tau n-2)+ (1-\tau)n((1-\tau)n-1)((1-\tau)n-2)\bigr)3r_s^2
   \\ \nonumber
  &\quad+3\bigl(4r_s r_d - r_s^2\bigr)
    \Bigl[\tau n (\tau n-1)(1-\tau)n+\tau n(1-\tau)n((1-\tau)n-1)\Bigr]  \\ \label{etrian}
  &= \Big[\bigl(1-3\tau(1-\tau)\bigr) 3r_s^2
    + 3(1-\tau)\tau \bigl(4r_s r_d - r_s^2\bigr)\Big]n^3+O(n^2r_s^2).
\end{align}
 If $r_s \geq 2r_d$, by Proposition \ref{proptria}, we have
\begin{align}\nonumber
\sum_{i\ne j\ne k} \mathbb{E}  \left[A_{ij}A_{jk}A_{ki}\right]
  &=\bigl(\tau n (\tau n-1)(\tau n-2)+ (1-\tau)n((1-\tau)n-1)((1-\tau)n-2)\bigr)3r_s^2
   \\ \nonumber
  &\quad+ 3
    \Bigl[\tau n (\tau n-1)(1-\tau)n+\tau n(1-\tau)n((1-\tau)n-1)\Bigr]4r_d^2  \\ \label{etrian2}
  &= \Big[\bigl(1-3\tau(1-\tau)\bigr) 3r_s^2
    + 12(1-\tau)\tau r_d^2\Big]n^3+O(n^2r_s^2).
\end{align}

Now, we establish \eqref{thempreq2} by applying Markov's inequality. 
Specifically, we derive an upper bound for the variance of the 
left-hand side of \eqref{thempreq2}. Let $T_n = \sum_{i \ne j \ne k} \bar{\Delta}_{ijk}$ with $\bar{\Delta}_{ijk}
= A_{ij}A_{jk}A_{ki} -\mathbb{E}  \left[A_{ij}A_{jk}A_{ki}\right]$. Then
\begin{align*}
\mathbb{E}  \left[ T_n^2 \right]= \sum_{i \ne j \ne k} \sum_{i_1 \ne j_1 \ne k_1}\mathbb{E}  \left[ \bar{\Delta}_{ijk} \bar{\Delta}_{i_1 j_1 k_1} \right] 
\end{align*}

If $\{i,j,k\}$ and $\{i_1,j_1,k_1\}$ are disjoint, then  $\bar{\Delta}_{ijk}$ and $\bar{\Delta}_{i_1 j_1 k_1}$ are independent. Hence, the $\mathbb{E}  \left[\bar{\Delta}_{ijk} \bar{\Delta}_{i_1 j_1 k_1} \right]=\mathbb{E}  \left[\bar{\Delta}_{ijk} \right]\mathbb{E}  \left[ \bar{\Delta}_{i_1 j_1 k_1} \right]=0$. 

Next, we show if $\{i,j,k\}$ and $\{i_1,j_1,k_1\}$ have exactly one vertex in common,  $\mathbb{E}  \left[\bar{\Delta}_{ijk} \bar{\Delta}_{i_1 j_1 k_1}\right]=0$.  To this end, we only need to show  $\mathbb{E}  \left[\bar{\Delta}_{123} \bar{\Delta}_{145}\right]=0$. Note that $\mathbb{E}  \left[ \bar{\Delta}_{123} \mid X_1 \right]=0$ by Proposition \ref{proptria}. Then
\[
\mathbb{E}  \left[ \bar{\Delta}_{123} \bar{\Delta}_{145} \right]
= \mathbb{E}  \left[
\mathbb{E}  \left[ \bar{\Delta}_{123} \bar{\Delta}_{145} \mid X_1 \right]
\right] = \mathbb{E}  \left[
\mathbb{E}  \left[ \bar{\Delta}_{123} \mid X_1 \right]
\mathbb{E}  \left[ \bar{\Delta}_{145} \mid X_1 \right]
\right]=0.
\]
Therefore, we get
\begin{align*}
\mathbb{E}  \left[T_n^2\right]
&= 48 \sum_{i\ne j\ne k\ne l}
\mathbb{E}  \left[\bar{\Delta}_{ijk} \bar{\Delta}_{ijl}\right]
+ \sum_{i\ne j\ne k} \mathbb{E}  \left[\bar{\Delta}_{ijk}^2\right]. 
\end{align*}
By Proposition \ref{proptria}, it is easy to obtain
\begin{align*}
\sum_{i\ne j\ne k} \mathbb{E}  \left[\bar{\Delta}_{ijk}^2\right]
&\leq \sum_{i\ne j\ne k} (\mathbb{E}  \left[A_{ij}A_{jk}A_{ki}\right]+O(r_s^4))= O  \left(n^3 r_s^2\right), \\
|\mathbb{E}  \left[\bar{\Delta}_{123} \bar{\Delta}_{124}\right]|
&\leq \mathbb{E}  \left[A_{12}A_{23}A_{31} A_{14}A_{24}\right] + \mathbb{E}  \left[\Delta_{123}\right]\mathbb{E}  \left[\Delta_{124}\right] \\
&\leq \mathbb{E}  \left[A_{12}A_{23}A_{24}\right]+O(r_n^4)\\
&= \Theta  \left(r_s^3\right).
\end{align*}
Therefore,
\begin{align*}
\mathbb{E}  \left[T_n^2\right]=O( n^4 r_s^3) + O(n^3 r_s^2).
\end{align*}
Then asymptotic relation \eqref{thempreq2} follows from Markov's 
inequality, \eqref{etrian}, \eqref{etrian2}, and the assumption $nr_s = \omega(1)$. Combining \eqref{thempreq1}, 
\eqref{thempreq2}, \eqref{etrian}, and \eqref{etrian2}, the proof of 
Theorem \ref{asygc} is complete.

\qed

\subsection{Proof of Proposition \ref{propcorunbg}} 
The expression (\ref{glambdta}) follows directly from Theorem \ref{asygc}. We need only identify the monotonic intervals for \(g(\lambda, \tau)\).

Suppose $\lambda\in[1,2)$. Then simple  algebra yields
\begin{equation*}
g(\lambda, \tau)= \frac{
    \bigl[3 - 12\,\tau(1-\tau)\bigr]\lambda^2 + 12\,\tau(1-\tau)\,\lambda
  }{
    \bigl[4 - 12\,\tau(1-\tau)\bigr]\lambda^2
    + 8\,\tau(1-\tau)\,\lambda + 4\,\tau(1-\tau)
  }.
\end{equation*}
Fix $\tau\in(0,1)$ and let $a=\tau(1-\tau)$.
Then $g(\lambda, \tau)$ can be written as
\[
g(\lambda, \tau)=\frac{(3-12a)\lambda^2+12a\lambda}{(4-12a)\lambda^2+8a\lambda+4a}.
\]
Set
\[
N(\lambda)=(3-12a)\lambda^2+12a\lambda,\qquad
D(\lambda)=(4-12a)\lambda^2+8a\lambda+4a.
\]
Then the derivative of $g(\lambda, \tau)$ with respect to $\lambda$ is equal to
\[
g'(\lambda, \tau)=\frac{N'(\lambda)D(\lambda)-N(\lambda)D'(\lambda)}{[D(\lambda)]^2}.
\]
Straightforward calculation yields
\[
N'(\lambda)=2(3-12a)\lambda+12a,
\ \ \ \ \ 
D'(\lambda)=2(4-12a)\lambda+8a.
\]
Then
\[
N'(\lambda)D(\lambda)-N(\lambda)D'(\lambda)
=
24a(\lambda-1)(2a\lambda-2a-\lambda).
\]
Since $0<a=\tau(1-\tau)\leq \frac{1}{4}$ and $\lambda\geq1$, then $2a\lambda-2a-\lambda<0$. Then $g'(\lambda, \tau)<0$, which implies $g(\lambda, \tau)$ is decreasing for $\lambda\in[1,2)$. Then $g(\lambda, \tau)<g(1, \tau)=\frac{3}{4}$ for $\lambda\in(1,2)$.

Suppose $\lambda \geq 2$. Then
\begin{equation*}
g(\lambda, \tau) = \frac{
    \bigl[3 - 9\,\tau(1-\tau)\bigr]\lambda^2 + 12\,\tau(1-\tau)
  }{
    \bigl[4 - 12\,\tau(1-\tau)\bigr]\lambda^2 + 8\,\tau(1-\tau)\,\lambda + 4\,\tau(1-\tau)
  }
\end{equation*}
Let $u=\tau(1-\tau)$. Then
\[
g(\lambda, \tau)=\frac{(3-9u)\lambda^2+12u}{(4-12u)\lambda^2+8u\lambda+4u}.
\]
Fix $\tau\in(0,1)$. The derivative of $g(\lambda, \tau)$ with respect to $\lambda$ is equal to
\[
g'(\lambda,\tau)=
\frac{24u\Bigl[(1-3u)\lambda^2-3(1-3u)\lambda-4u\Bigr]}
{\Bigl((4-12u)\lambda^2+8u\lambda+4u\Bigr)^2}.
\]
Since \(0\le u\le \frac14\), we have $1-3u>0$.
Then \(q(\lambda)=(1-3u)\lambda^2-3(1-3u)\lambda-4u\) is an upward-opening quadratic function. Its roots are
\[
\lambda=
\frac{3(1-3u)\pm\sqrt{(1-3u)(9-11u)}}{2(1-3u)}.
\]
Note that the product of the roots is
\[
\frac{-4u}{1-3u}< 0.
\]
Therefore, one root is negative and the other is positive. Moreover,
\[
q(2)=4(1-3u)-6(1-3u)-4u=-2+2u<0,
\]
which implies $2$ is less than the positive root.
Hence, the unique critical point in \((2,\infty)\) is
\[
\lambda^*=
\frac{3(1-3u)+\sqrt{(1-3u)(9-11u)}}{2(1-3u)}.
\]
Then, we have
\[
g'(\lambda,\tau)<0 \quad \text{for } 2<\lambda<\lambda_*,
\]
and
\[
g'(\lambda,\tau)>0 \quad \text{for } \lambda>\lambda_*.
\]
It follows that  \(g(\lambda,\tau)\) is decreasing on \((2,\lambda_*)\) and increasing on \((\lambda_*,\infty)\). Thus,  \(g(\lambda,\tau)\)  attains its minimum at
\[
\lambda^* = \frac{3}{2} + \frac{1}{2} \sqrt{\frac{9 - 11\tau(1-\tau)}{1 - 3\tau(1-\tau)}}.
\]
Then the proof is complete.

 \qed

\subsection{Proof of Theorem  \ref{asyalc}}

 Without loss of generality, let $V_1=\{1,2,\dots,\tau n\}$ and $V_2=\{\tau n+1,\tau n+2,\dots, n\}$. By definition,  the average clustering coefficient can be written as
 \begin{equation} \label{avccrw1}
\overline{\mathcal{C}}_n=\frac{1}{n}\sum_{i=1}^{\tau n}\frac{\sum_{j\neq k} A_{ij}A_{jk}A_{ki}}{d_i(d_i-1)}\mathbb{I}[d_i\geq2]+\frac{1}{n}\sum_{i=\tau n+1}^{n}\frac{\sum_{j\neq k} A_{ij}A_{jk}A_{ki}}{d_i(d_i-1)}\mathbb{I}[d_i\geq2].
\end{equation}
Next, we determine the limit of each term in \eqref{avccrw1}. 

We begin by 
evaluating the limit of the first term. By Lemma 3.1 in \cite{Y25c}, $\mathbb{E}[A_{ij}] = 2r_s$ if nodes $i$ 
and $j$ are in the same community, and $\mathbb{E}[A_{ij}] = 2r_d$ 
if they are in different communities.
Let $\mu_1=\mathbb{E}[d_i]$ for $i\in V_1$ and $\mu_2=\mathbb{E}[d_i]$ for $i\in V_2$.
Then
\begin{align}\label{mu1exp}
  \mu_1=\mathbb{E}[d_i] &= (2r_s)(\tau n-1) + 2r_d(1-\tau)n= 2n\bigl[(1-\tau)r_d + \tau r_s\bigr]-2r_s,\\ \label{mu2exp}
  \mu_2=\mathbb{E}[d_i] &= (2r_s)((1-\tau)n-1) + 2r_d \tau n= 2n\bigl[(1-\tau)r_s + \tau r_d\bigr]-2r_s.
\end{align}
For convenience, let $\Delta_i=\sum_{j\neq k} A_{ij}A_{jk}A_{ki}$. Then the first term of (\ref{avccrw1}) can be expressed as
\begin{align}\label{avccrw2}
  \frac{1}{n}\sum_{i=1}^{\tau n} \frac{\Delta_i\mathbb{I}[d_i\geq2]}{d_i(d_i - 1)}
  = \frac{1}{n}\sum_{i=1}^{\tau n} \frac{\Delta_i\mathbb{I}[d_i\geq2]}{\mu_1(\mu_1 - 1)}+ \frac{1}{n}\sum_{i=1}^{\tau n}
    \frac{\Delta_i\bigl(\mu_1(\mu_1-1) - d_i(d_i-1)\bigr)}{d_i(d_i-1) \mu_1(\mu_1-1)}\mathbb{I}[d_i\geq2].
\end{align}
We will derive the limit of the first term of (\ref{avccrw2}) and show the second term is negligible.  

Note that $\Delta_i=0$ if $d_i=0, 1$. Hence, $\Delta_i\mathbb{I}[d_i\geq2]=\Delta_i$.
By a similar argument as in the proof of (\ref{thempreq2}), we have
\begin{equation*}
  \sum_{i=1}^{\tau n}\Delta_i =\sum_{i=1}^{\tau n}\mathbb{E}[\Delta_i]+O_P \left(\sqrt{n^4 r_s^3}\right).
\end{equation*}
Recall that $r_s = \Theta(r_d)$ by assumption, which implies that 
$\mu_1 = \Theta(nr_s)$. It follows that:
\begin{equation}\label{avccrw3}
  \frac{1}{n}\sum_{i=1}^{\tau n} \frac{\Delta_i\mathbb{I}[d_i\geq2]}{\mu_1(\mu_1-1)}=\frac{1}{n}\sum_{i=1}^{\tau n} \frac{\Delta_i}{\mu_1(\mu_1-1)}
  = \tau \frac{\mathbb{E}[\Delta_1]}{\mu_1(\mu_1-1)}
  + O_P  \left(\frac{1}{n\sqrt{r_n}}\right).
\end{equation}

 Next, we show the second term of (\ref{avccrw2}) is negligible.   For nodes $i,j$ in the same community,  $A_{ij}=\mathbb{I}[d(X_i,X_j)\leq r_s]\geq \mathbb{I}[d(X_i,X_j)\leq r_d]$. Then $d_i=\sum_{j}A_{ij}\geq \sum_{j\neq i}\mathbb{I}[d(X_i,X_j)\leq r_d]$.  Note that $\mathbb{E}[\mathbb{I}[d(X_i,X_j)\leq r_d]|X_i]=2r_d$ by Lemma 3.1 in \cite{Y25c}. Given $X_i$, $\sum_{j\neq i}\mathbb{I}[d(X_i,X_j)\leq r_d]$ follows binomial distribution with parameter $n-1$ and success probability $2r_d$. Let $M$ be a large positive constant. By the  Chernoff bound for binomial distribution, we have
\begin{align}\label{chernoff1}
     \mathbb{P}  \left(d_i \leq \frac{n r_d}{M}\Big|X_i\right)\leq \mathbb{P}  \left(\sum_{j\neq i}\mathbb{I}[d(X_i,X_j)\leq r_d] \leq \frac{n r_d}{M}\Big|X_i\right) \leq e^{-c_1 n r_d}.
\end{align}

For nodes $i,j$ in different communities,  $A_{ij}=\mathbb{I}[d(X_i,X_j)\leq r_d]\leq \mathbb{I}[d(X_i,X_j)\leq r_s]$. Then $d_i=\sum_{j}A_{ij}\leq \sum_{j\neq i}\mathbb{I}[d(X_i,X_j)\leq r_s]$.  Note that $\mathbb{E}[\mathbb{I}[d(X_i,X_j)\leq r_s]|X_i]=2r_s$ by Lemma 3.1 in \cite{Y25c}. Given $X_i$, $\sum_{j\neq i}\mathbb{I}[d(X_i,X_j)\leq r_s]$ follows binomial distribution with parameter $n-1$ and success probability $2r_s$. For the same large positive constant $M$ given in the previous case, by the  Chernoff bound for binomial distribution, we have
\begin{align}\label{chernoff2}
     \mathbb{P}  \left(d_i \geq M n r_s\Big|X_i\right)\leq \mathbb{P}  \left(\sum_{j\neq i}\mathbb{I}[d(X_i,X_j)\leq r_s] \geq  M n r_s\Big|X_i\right) \leq e^{-c_2 n r_s}.
\end{align}

We claim that $\mathbb{E}[\Delta_1^2] = O(n^4r_s^2)$. To see this, observe 
that for distinct indices $i, j, i_1, j_1$, Proposition \ref{proptria} 
and the assumption $r_s = \Theta(r_d)$ imply that:
\[\mathbb{E}[A_{1i}A_{1j}A_{ij}A_{1i_1}A_{1j_1}A_{i_1j_1}]=\mathbb{E}\big[\mathbb{E}[A_{1i}A_{1j}A_{ij}|X_1]\mathbb{E}[A_{1i_1}A_{1j_1}A_{i_1j_1}|X_1]\big]=O(r_s^4),\]
\[\mathbb{E}[A_{1i}A_{1j}A_{ij}A_{1i}A_{1j_1}A_{ij_1}]\leq\mathbb{E}\big[\mathbb{E}[A_{1i}A_{1j}A_{ij}|X_1]\mathbb{E}[A_{1j_1}|X_1]\big]=O(r_s^3),\]
\[\mathbb{E}[A_{1i}A_{1j}A_{ij}]=O(r_s^2).\]
Since $nr_n=\omega(1)$, then
\begin{align*}
   \mathbb{E}[\Delta_1^2]&=\sum_{i\neq j\neq i_1\neq j_1} \mathbb{E}[A_{1i}A_{1j}A_{ij}A_{1i_1}A_{1j_1}A_{i_1j_1}]+C_1\sum_{i\neq j\neq j_1} \mathbb{E}[A_{1i}A_{1j}A_{ij}A_{1i}A_{1j_1}A_{ij_1}]\\
   &\quad+C_2\sum_{i\neq j} \mathbb{E}[A_{1i}A_{1j}A_{ij}]\\
   &=O(n^4r_s^4+n^3r_s^3+n^2r_s^2)=O(n^4r_s^4).
\end{align*}

Recall that $r_s=\Theta(r_d)$ and $nr_s=\omega(1)$ by assumption, and $\mu_1=\Theta(nr_s)$.
By the Cauchy-Schwarz inequality and  (\ref{chernoff1}), we get
\begin{align}\nonumber
  &\mathbb{E}  \left[
    \frac{1}{n}\sum_{i=1}^{\tau n}
    \left|
      \frac{\Delta_i\bigl(\mu_1(\mu_1-1) - d_i(d_i-1)\bigr)}{d_i(d_i-1) \mu_1(\mu_1-1)}
    \right|
    \mathbb{I}  \left [2 \leq d_i \leq \frac{n r_d}{M}\right]
  \right]\\ \nonumber
  &\leq O(1)\mathbb{E}  \left[
        \frac{1}{n}\sum_{i=1}^{\tau n} \Delta_i \mathbb{I}   \left[2 \leq d_i \leq \frac{n r_d}{M}\right]
    \right] \\ \nonumber
  &\leq O(1)  \frac{1}{n}\sum_{i=1}^{\tau n} \sqrt{\mathbb{E}[\Delta_i^2]
    \mathbb{P}    \left(d_i \leq \frac{n r_d}{M}\right)}\\  \nonumber
  &= O  \left(\sqrt{n^4 r_s^4}e^{-\frac{c_1 n r_d}{2}}\right)\\ \label{remeq1}
  &=o(1).
\end{align}
By a similar argument and (\ref{chernoff2}), we have
\begin{align}\nonumber
  &\mathbb{E}  \left[
    \frac{1}{n}\sum_{i=1}^{\tau n}
    \left|
      \frac{\Delta_i\bigl(\mu_1(\mu_1-1) - d_i(d_i-1)\bigr)}{d_i(d_i-1) \mu_1(\mu_1-1)}
    \right|
    \mathbb{I}  \left [d_i \geq M n r_s\right]
  \right]\\ \nonumber
  &\leq O(1)  \frac{1}{n}\sum_{i=1}^{\tau n} \sqrt{\mathbb{E}[\Delta_i^2]
    \mathbb{P}    \left(d_i \geq M n r_s\right)}\\  \nonumber
  &= O  \left(\sqrt{n^4 r_s^4}e^{-\frac{c_2 n r_s}{2}}\right)\\ \label{remeq2}
  &=o(1).
\end{align}

Now we consider the case $\dfrac{n r_d}{M} \leq d_i \leq M  n r_s$. We claim $\mathbb{E}[(d_1-\mu_1)^2]=O(nr_s)$. To see this,
note that $d_1-\mu_1=\sum_{j=2}^{\tau n}(A_{1j}-2r_s)+\sum_{j=\tau n+1}^{n}(A_{1j}-2r_d)$. By Lemma 3.1 in \cite{Y25c}, $\mathbb{E}[A_{1i}|X_1]=2r_s$ for $2\leq i\leq \tau n$.  For $2\leq i\neq j\leq \tau n$, we have
\[\mathbb{E}[(A_{1i}-2r_s)(A_{1j}-2r_s)|X_1]=\mathbb{E}[(A_{1i}-2r_s)|X_1]\mathbb{E}[(A_{1j}-2r_s)|X_1]=0.\]
Then $\mathbb{E}[(A_{1i}-2r_s)(A_{1j}-2r_s)]=0$ and
\[\mathbb{E}\left[\left(\sum_{j=2}^{\tau n}(A_{1j}-2r_s)\right)^2\right]=\mathbb{E}\left[\sum_{j=2}^{\tau n}(A_{1j}-2r_s)^2\right]=O(nr_s).\]
The second moment of the second term of  $d_1-\mu_1$ can be similarly bounded. Therefore,  $\mathbb{E}[(d_1-\mu_1)^2]=O(nr_s)$.

 Straightforward calculation yields
\begin{align*}
  \mu_1(\mu_1-1) - d_i(d_i-1)
  = \mu_1^2 - d_i^2 + d_i - \mu_1= (\mu_1 - d_i)\bigl[\mu_1 + d_i - 1\bigr].
\end{align*} 
Since $nr_s=\omega(1)$ by assumption, then
\begin{align}\nonumber
  &\mathbb{E}  \left[\frac{1}{n}
    \sum_{i=1}^{\tau n}
    \left|
      \frac{\Delta_i\bigl(\mu_1(\mu_1-1) - d_i(d_i-1)\bigr)}{d_i(d_i-1) \mu_1(\mu_1-1)}
    \right|
    \cdot  \mathbb{I}  \left[\frac{n r_d}{M} \leq d_i \leq M  n r_s\right]
  \right]
  \\ \nonumber
  &\leq \mathbb{E}  \left[
      \frac{1}{n}\sum_{i=1}^{\tau n}
      \frac{\Delta_i |\mu_i - d_i| |\mu_i + d_i - 1|}{d_i(d_i-1) \mu_1(\mu_1-1)}
      \cdot  \mathbb{I}  \left[\frac{n r_s}{M} \leq d_i \leq M  n r_s\right]
    \right] \\  \nonumber
  &= O  \left(\frac{1}{n^4 r_s^3}\right)
    \sum_{i=1}^{\tau n} \mathbb{E}\bigl[\Delta_i |\mu_i - d_i|\bigr] \\\nonumber
  &= O  \left(\frac{n}{n^4 r_s^3}\right)
    \sqrt{\mathbb{E}[\Delta_1^2]\;\mathbb{E}[(\mu_1-d_1)^2}] \\ \label{remeq3}
  &= O\left(\
    \frac{\sqrt{n^4 r_n^4\cdot n r_n}}{n^3 r_n^3}\right)
  = O  \left(\frac{1}{\sqrt{n r_n}}\right)=o(1).
\end{align}

Combining (\ref{remeq1}), (\ref{remeq2}), (\ref{remeq3}) and Markov's inequality yields
\begin{align}\label{remeq4}
\frac{1}{n}\sum_{i=1}^{\tau n}
    \frac{\Delta_i\bigl(\mu_1(\mu_1-1) - d_i(d_i-1)\bigr)}{d_i(d_i-1) \mu_1(\mu_1-1)}\mathbb{I}[d_i\geq2]=o_P(1).
\end{align}

By (\ref{avccrw2}), (\ref{avccrw3}) and (\ref{remeq4}), we have
\begin{align}\label{remeq5}
  \frac{1}{n}\sum_{i=1}^{\tau n}\frac{\sum_{j\neq k} A_{ij}A_{jk}A_{ki}}{d_i(d_i-1)}\mathbb{I}[d_i\geq2]
  = \tau \frac{\mathbb{E}[\Delta_1]}{\mu_1(\mu_1-1)}+o_P(1).
\end{align}

By a similar argument, it is easy to get
\begin{align}\label{remeq6}
 \frac{1}{n}\sum_{i=\tau n+1}^{n}\frac{\sum_{j\neq k} A_{ij}A_{jk}A_{ki}}{d_i(d_i-1)}\mathbb{I}[d_i\geq2]
  = (1-\tau) \frac{\mathbb{E}[\Delta_n]}{\mu_2(\mu_2-1)}+o_P(1).
\end{align}

Combining (\ref{avccrw1}), (\ref{remeq5}) and (\ref{remeq6}) yields
 \begin{equation} \label{1avccrw1}
\overline{\mathcal{C}}_n=\tau \frac{\mathbb{E}[\Delta_1]}{\mu_1(\mu_1-1)}+(1-\tau) \frac{\mathbb{E}[\Delta_n]}{\mu_2(\mu_2-1)}+o_P(1).
\end{equation}
Suppose   $r_d \leq r_s < 2r_d$. By Proposition \ref{proptria}, we have
\begin{align}\nonumber
  \mathbb{E}[\Delta_1]
  &= 3r_s^2 (\tau n-1)(\tau n-2) + (4r_s r_d - r_s^2)
     \bigl[2(\tau n-1)(1-\tau)n + (1-\tau)n((1-\tau)n-1)\bigr] \\ \label{edelta1}
  &= n^2\Bigl[3\tau^2 r_s^2 + (4r_s r_d - r_s^2)(1-\tau^2)\Bigr]+O(nr_s^2),
\end{align}
\begin{align}\nonumber
  \mathbb{E}[\Delta_n]  &= ((1-\tau)n-1)((1-\tau)n-2) 3r_s^2
    + (4r_s r_d - r_s^2)\bigl[2\tau n((1-\tau) n-1) + \tau n(\tau n-1)\bigr] \\ \nonumber
  &= (1-\tau)^2 n^23r_s^2
    + (4r_s r_d - r_s^2)\bigl[2\tau(1-\tau)n^2 + \tau^2 n^2\bigr]+O(nr_s^2)\\ \label{edelta2}
  &= n^2\Bigl[3(1-\tau)^2 r_s^2 + (4r_s r_d - r_s^2) \tau(2-\tau)\Bigr]+O(nr_s^2).
\end{align}

Suppose $r_s \geq 2r_d$. By Proposition \ref{proptria}, we have
\begin{align}\nonumber
  \mathbb{E}[\Delta_1]
  &= 3r_s^2 (\tau n-1)(\tau n-2) + 4 r_d^2
     \bigl[2(\tau n-1)(1-\tau)n + (1-\tau)n((1-\tau)n-1)\bigr] \\ \label{edelta3}
  &= n^2\Bigl[3\tau^2 r_s^2 + 4 r_d^2(1-\tau^2)\Bigr]+O(nr_s^2),
\end{align}
\begin{align}\nonumber
  \mathbb{E}[\Delta_n]  &= ((1-\tau)n-1)((1-\tau)n-2) 3r_s^2
    + 4 r_d^2\bigl[2\tau n((1-\tau) n-1) + \tau n(\tau n-1)\bigr] \\ 
 \label{edelta4}
  &= n^2\Bigl[3(1-\tau)^2 r_s^2 + 4 r_d^2\tau(2-\tau)\Bigr]+O(nr_s^2).
\end{align}

Combining (\ref{mu1exp}), (\ref{mu2exp}), (\ref{1avccrw1}), (\ref{edelta1}),  (\ref{edelta2}) , (\ref{edelta3}) and  (\ref{edelta4})  yields  the results of Theorem \ref{asyalc}. Then the proof is complete.

\qed

\subsection{Proof of Proposition \ref{propml1}}

	{\bf Proof of (a):} Fix $\tau \in (0,1)$, and denote $ h(\lambda)=h(\lambda,\tau)$. 
	For $ \lambda \in [1, 2)$, the derivative of $h(\lambda)$ is
	\begin{align*}
		h'(\lambda) 
		& = \frac{\tau (1-\tau)}{2} \bigg[ \frac{(2\tau^2-2\tau-1)\lambda + 2(1-\tau^2)}{(\tau\lambda + 1-\tau)^3} + \frac{(2\tau^2-2\tau-1)\lambda + 2\tau(2 - \tau)}{((1-\tau)\lambda + \tau)^3} \bigg].
	\end{align*}
	Observe that $ h'(1) = 0 $. Hence, $\lambda^* = 1$ is a critical point. The second derivative of $h(\lambda)$ is
	\begin{align*}
		h''(\lambda) 
		& = - \frac{\tau (1-\tau)}{2} \bigg[\frac{2\tau(2\tau^2-2\tau-1)\lambda + (1-\tau)(4\tau^2+8\tau+1)}{(\tau\lambda + 1-\tau)^4} \\ 
		& \qquad \qquad \qquad \
		+ \frac{2(1-\tau)(2\tau^2-2\tau-1)\lambda + \tau(4\tau^2-16\tau+13)}{((1-\tau)\lambda + \tau)^4} \bigg].
	\end{align*}
	It follows that
	\begin{align*}
		h''(1) 
		& = \frac{\tau (1-\tau)(16\tau^2 - 16\tau + 1)}{2}.
	\end{align*}
	By the graph of $ f(\tau) = 16\tau^2 - 16\tau + 1$, $ h''(1) > 0 $ if $\tau \in (0, 0.0670) \cup (0.9330, 1)$.
	This implies $ h(\lambda) $ is convex around $\lambda^* = 1$, i.e. $ h(\lambda) $ is increasing on the right-hand neighborhood of $\lambda^* = 1$.
	Hence, for $ \lambda \in [1, 2)$ and fixed $\tau \in (0, 0.0670) \cup (0.9330, 1)$, the function $h(\lambda,\tau)$ defined in Corollary \ref{lgccor2} has at least one local maximum which is greater than $h(1) = 3/4$ and occurs at $\lambda^* \in (1, 2)$.
	In addition, $h'(\lambda) = 0$ is equivalent to the following quartic equation
	\begin{align*}
		[(2\tau^2-2\tau-1)\lambda + 2(1-\tau^2)]((1-\tau)\lambda + \tau)^3 + [(2\tau^2-2\tau-1)\lambda + 2\tau(2 - \tau)](\tau\lambda + 1-\tau)^3 = 0,
	\end{align*}
	which has at most four real solutions.
	Hence, there are at most two local maximums greater than $3/4$, occurring at $\lambda^* \in (1, 2)$.

{\bf Proof of (b): }
Let $\tau = 0.01$. For $ \lambda \in [1, 2)$, we have
	\begin{align*}
		h(\lambda) 
		& = 0.03\frac{- 833\lambda^2 + 3333\lambda }{(\lambda + 99)^2} + 0.99\frac{7301\lambda^2 + 199\lambda}{(99\lambda + 1)^2}, \\
		h'(\lambda) 
		& = 0.99\frac{(-5099\lambda + 9999)(99\lambda + 1)^3 + (-5099\lambda + 199)(\lambda + 99)^3}{(\lambda + 99)^3(99\lambda + 1)^3}, \\
		h''(\lambda) 
		& = 1.98\bigg[\frac{5099\lambda - 267399}{(\lambda + 99)^4} + \frac{504801\lambda - 32101}{(99\lambda + 1)^4}\bigg].
	\end{align*}
	Let $ h'(\lambda) = 0 $.
	The solutions to the equation
	\begin{align*}
		(-5099\lambda + 9999)(99\lambda + 1)^3 + (-5099\lambda + 199)(\lambda + 99)^3 = 0
	\end{align*}
	are $\lambda_1 = -0.6466$, $\lambda_2 = 0.0393$, $\lambda_3 = 1$, and $\lambda_4 = 1.5377$.
	In $ [1, 2) $, the critical points are
	\begin{align*}
		\lambda^*_1 = 1 \quad \text{and} \quad \lambda^*_2 = 1.5377.	
	\end{align*}
	Note that
	\begin{align*}
		h''(\lambda^*_1) & = h''(1) = 0.0042 > 0, \\
		h''(\lambda^*_2) & = h''(1.5377) = -0.0024 < 0.
	\end{align*}
	Then $h(\lambda)$ has a local minimum at $1$ and a local maximum at $1.5377$, i.e. it is increasing on $(1, 1.5377)$ and decreasing on $(1.5377, 2)$.
	Hence, the local minimum is
	\begin{align*}
		h(\lambda^*_1) = h(1) = \frac{3}{4},
	\end{align*}
	and the local maximum is
	\begin{align*}
		h(\lambda^*_2) = h(1.5377) = 0.7502.
	\end{align*}
	For $ \lambda \in [2, \infty)$, we have
	\begin{align*}
		h(\lambda) 
		& = 0.0075\frac{\lambda^2 + 13332}{(\lambda + 99)^2} + 0.2475\frac{29403\lambda^2 + 796}{(99\lambda + 1)^2}, \\
		h'(\lambda) 
		& = 0.495\frac{(3\lambda - 404)(99\lambda + 1)^3 + 99(297\lambda - 796)(\lambda + 99)^3}{(\lambda + 99)^3(99\lambda + 1)^3}, \\
		h''(\lambda) 
		& = 1.485\bigg[\frac{-2\lambda + 503}{(\lambda + 99)^4} + 9801\frac{-198\lambda + 797}{(99\lambda + 1)^4}\bigg].
	\end{align*}
	Let $ h'(\lambda) = 0 $.
	The real solutions to the equation
	\begin{align*}
		(3\lambda - 404)(99\lambda + 1)^3 + 99(297\lambda - 796)(\lambda + 99)^3 = 0
	\end{align*}
	are $\lambda_1 = -8.3513$, $\lambda_2 = 3.0224$, $\lambda_3 = 8.0742$, and $\lambda_4 = 127.6016$.
	In $ [2, \infty) $, the critical points are
	\begin{align*}
		\lambda^*_3 = 3.0224, \quad \lambda^*_4 = 8.0742, \quad \text{and} \quad \lambda^*_5 = 127.6016.	
	\end{align*}
	Note that
	\begin{align*}
		h''(\lambda^*_3) & = h''(3.0224) = 0.0004 > 0, \\
		h''(\lambda^*_4) & = h''(8.0742) = -2.2937 \times 10^{-5} < 0, \\
		h''(\lambda^*_5) & = h''(127.6016) = 1.2558 \times 10^{-7} > 0.
	\end{align*}
	Then $h(\lambda)$ has two local minimums at $3.0224$ and $127.6016$, and a local maximum at $8.0742$, i.e. it is decreasing on $(2, 3.0224)$ and $(8.0742, 127.6016)$, and increasing on $(3.0224, 8.0742)$ and $(127.6016, \infty)$.
	Hence, the local minimums are
	\begin{align*}
		h(\lambda^*_3) & = h(3.0224) = 0.7494, \\
		h(\lambda^*_5) & = h(127.6016) = 0.7467,
	\end{align*}
	and the local maximum is
	\begin{align*}
		h(\lambda^*_4) = h(8.0742) = 0.7497.
	\end{align*}
	Comparing all of the local extrema and $\displaystyle \lim_{\lambda \to \infty} h(\lambda) = 3/4$, the global minimum is
	\begin{align*}
		h_{\min}(\lambda) = h(127.6016) = 0.7467,
	\end{align*}
	and the global maximum is
	\begin{align*}
		h_{\max}(\lambda) = h(1.5377) = 0.7502 \ > \ 3/4.
	\end{align*}
	In addition, observe that $\displaystyle \lim_{\lambda \to 2^-} h(\lambda) = h(2) $, i.e. $h(\lambda)$ is continuous at $2$. 
	Hence, $ h(\lambda) $ is increasing on $(1, 1.5377)$, $(3.0224, 8.0742)$, and $(127.6016, \infty)$, and decreasing on $(1.5377, 3.0224)$ and $(8.0742, 127.6016)$.

{\bf Proof of (c):} Fixed $\tau = 0.1$. For $ \lambda \in [1, 2)$, we have
	\begin{align*}
		h(\lambda) 
		& = 0.3\frac{-8\lambda^2 + 33\lambda}{(\lambda + 9)^2} + 0.9\frac{56\lambda^2 + 19\lambda}{(9\lambda + 1)^2}, \\
		h'(\lambda) 
		& = 0.9 \frac{(- 59\lambda + 99)(9\lambda + 1)^3 + (-59\lambda + 19)(\lambda + 9)^3}{(\lambda + 9)^3(9\lambda + 1)^3}, \\
		h''(\lambda)
		& = 1.8 \bigg[ \frac{59\lambda - 414}{(\lambda + 9)^4} + \frac{531\lambda -286}{(9\lambda + 1)^4} \bigg].
	\end{align*}
	Let $ h'(\lambda) = 0 $.
	The real solutions to the equation
	\begin{align*}
		(- 59\lambda + 99)(9\lambda + 1)^3 + (-59\lambda + 19)(\lambda + 9)^3 = 0
	\end{align*}
	are $ \lambda_1 = -0.8111 $ and $ \lambda_2 = 1 $.
	In $ [1, 2) $, the critical point is
	$\lambda^*_1 = 1$.	
	Note that
	\begin{align*}
		h''(\lambda^*_1) & = h''(1) = -0.0198 < 0,
	\end{align*}
	which implies $h(\lambda)$ has a local maximum at $1$, i.e. it is decreasing on $(1, 2)$.
	Hence, the local maximum is
	\begin{align*}
		h(\lambda^*_1) = h(1) = \frac{3}{4},
	\end{align*}
	For $ \lambda \in [2, \infty)$, we have
	\begin{align*}
		h(\lambda) 
		& = 0.075 \frac{\lambda^2 + 132}{(\lambda + 9)^2} + 0.225 \frac{243\lambda^2 + 76}{(9\lambda + 1)^2}, \\
		h'(\lambda) 
		& = 0.45 \frac{(3\lambda - 44)(9\lambda + 1)^3 + 9(27\lambda - 76)(\lambda + 9)^3}{(\lambda + 9)^3(9\lambda + 1)^3}, \\
		h''(\lambda)
		& = 1.35\bigg[\frac{- 2\lambda + 53}{(\lambda + 9)^4} + 81 \frac{- 18\lambda + 77}{(9\lambda + 1)^4}\bigg].
	\end{align*}
	Let $ h'(\lambda) = 0 $.
	The real solutions to the  equation
	\begin{align*}
		(3\lambda - 44)(9\lambda + 1)^3 + 9(27\lambda - 76)(\lambda + 9)^3 = 0
	\end{align*}
	are $ \lambda_1 = -2.2762 $ and $ \lambda_2 = 9.3689 $. 
	In $ [2, \infty) $, the critical point is $\lambda^*_2 = 9.3689$. Note that
	\begin{align*}
		h''(\lambda^*_2) & = h''(9.3689) = 0.0002 > 0.
	\end{align*}
	Then $h(\lambda)$ has a local minimum at $9.3689$, i.e. it is decreasing on $(2, 9.3689)$ and increasing on $(9.3689, \infty)$.
	Hence, the local minimum is
	\begin{align*}
		h(\lambda^*_2) = h(9.3689) = 0.7105.
	\end{align*}
	Comparing all of the local extrema and $\displaystyle \lim_{\lambda \to \infty} h(\lambda) = 3/4$, the global minimum is
	\begin{align*}
		h_{\min}(\lambda) = h(9.3689) = 0.7105,
	\end{align*}
	and the global maximum is
	\begin{align*}
		h_{\max}(\lambda) = h(1) = \frac{3}{4} = \lim_{\lambda \to \infty} h(\lambda).
	\end{align*}	
	In addition, observe that $\displaystyle \lim_{\lambda \to 2^-} h(\lambda) = h(2) $, i.e. $h(\lambda)$ is continuous at $2$. 
	Hence, $ h(\lambda) $ is decreasing on $(1, 9.3689)$ and increasing on $(9.3689, \infty)$.

\qed

\section*{Statements}
 Data sharing is not applicable to this article as no new data were created or analyzed during this study. The authors declare no conflicts of interest.

\section*{Acknowledgement}
Mingao Yuan thanks The University of Texas at El Paso for providing generous startup funds.

\end{document}